\newtheorem{prethm}{{\bf Theorem}}[section]
\newenvironment{thm}{\begin{prethm}{\hspace{-0.5
em}{\bf.}}}{\end{prethm}}
\newtheorem{prepro}{{\bf Theorem}}
\newtheorem{precor}[prethm]{{\bf Corollary}}
\newenvironment{cor}{\begin{precor}{\hspace{-0.5
em}{\bf.}}}{\end{precor}}
\newtheorem{preconj}[prethm]{{\bf Conjecture}}
\newtheorem{preremark}[prethm]{{\bf Remark}}
\newenvironment{remark}{\begin{preremark}\em{\hspace{-0.5
em}{\bf.}}}{\end{preremark}}
\newtheorem{prelem}[prethm]{{\bf Lemma}}
\newenvironment{lem}{\begin{prelem}{\hspace{-0.5
em}{\bf.}}}{\end{prelem}}
\newtheorem{preque}[prethm]{{\bf Question}}
\newtheorem{preobserv}[prethm]{{\bf Observation}}
\newtheorem{predef}[prethm]{{\bf Definition}}
\newtheorem{preproposition}[prethm]{{\bf Proposition}}
\newtheorem{preproof}{{\bf Proof.}}
\newtheorem{preprooff}{{\bf Proof}}
\newenvironment{proof}[1]{\begin{preproof}{\rm
#1}\hfill{$\Box$}}{\end{preproof}}
\newtheorem{preproofs}{{\bf The second proof of }}
\newtheorem{preprooft}{{\bf Third proof of }}
\newtheorem{preproofF}{{\bf Proof of}}
\title{\bf\Large 
The existence of tree-connected $\{g,f\}$-factors in edge-connected graphs and tough graphs
}
\author{{\normalsize{\sc Morteza Hasanvand${}$} }\vspace{3mm}
\\{\footnotesize{${}$\it Department of Mathematical
 Sciences, Sharif
University of Technology, Tehran, Iran}}
{\footnotesize{}}\\{\footnotesize{ $\mathsf{morteza.hasanvand@alum.sharif.edu }$ }}}
\date{}
\def\p {p}
\def\q {q}
\def\t {t}
\begin{document}
\maketitle
\begin{abstract}{
In 1970 Lov{\'a}sz gave a necessary and sufficient condition for the existence of a factor $F$ in a graph $G$ such that for each vertex $v$, $g(v)\le d_F(v)\le f(v)$, where $g$ and $f$ are two integer-valued functions on $V(G)$ with $g\le f$. In this paper, we give a sufficient edge-connectivity condition for the existence of an $m$-tree-connected factor $H$ in a bipartite graph $G$ with bipartition $(X,Y)$ such that its complement is $m_0$-tree-connected and for each vertex $v$, $d_H(v)\in \{g(v),f(v)\}$, provided that for each vertex $v$, $g(v)+m_0\le \frac{1}{2}d_G(v)\le f(v)-m$ and $|f(v)-g(v)|\le k$, and there is $h(v)\in \{g(v),f(v)\}$ in which $\sum_{v\in X}h(v)=\sum_{v\in Y}h(v)$. Moreover, we generalize this result to general graphs. As an application, we give sufficient conditions for the existence of tree-connected $\{g,f\}$-factors in edge-connected graphs and tough graphs.
\\
\\
\noindent {\small {\it Keywords}: Factor; vertex degree; $\{g,f\}$-factor; connectivity; toughness. }} {\small
}
\end{abstract}
%
%
%
%
%
%
%
%
%
%
\section{Introduction}
In this article, all graphs may have loops and multiple edges and a simple graph have neither loops not multiple edges.
Let $G$ be a graph. The vertex set, the edge set of $G$, and the number of components of $G$ are denoted by $V(G)$, $E(G)$, and $\omega(G)$, respectively.
We denote by $d_G(v)$ the degree of a vertex $v$ in the graph $G$.
For a vertex set $X$, we denote by $d_G(X)$ the number of edges of $G$ with exactly one end in $X$,
 and denote by
$e_G(X)$ denotes the number of edges of $G$ with both ends in $X$.
We also denote by $d_G(X,Y)$ the number of edges of $G$ with one end in $X$ and the other one in $Y$ and denote by $G[X, Y]$ the induced bipartite factor of $G$ with the bipartition $(X,Y)$.
Let $f:V(G)\rightarrow Z_k$ be a mapping, where $Z_k$ is the cyclic group of order $k$.
We say that $f$ is 
{\it  compatible with $G$}, if  for every bipartition $X,Y$ of $V(G)$, there are two integers $x$ and $y$ satisfying
 $0\le x\le e_G(X)$ and $0\le y\le e_G(Y)$ such that 
$\sum_{v\in X} f(v) -2x\stackrel{k}{\equiv} \sum_{v\in Y}f(v)-2y$.
The {\it bipartite index $bi(G)$} of a graph $G$ is the smallest number of all $|E(G)\setminus E(H)|$ taken over all bipartite spanning induced subgraphs~$H$ of $G$. 
For each vertex, let $L(v)$ be a set of integers.
An orientation of $G$ is called {\it $L$-orientation}, if for each vertex $v$, $d^+_G(v)\in L(v)$, where $d^+_G(v)$ denotes the out-degree of $v$.
A factor $F$ of $G$ is said to be 
(i) {\it $L$-factor}, if for each vertex $v$, $d_F(v)\in L(v)$,
 (ii) {\it $f$-factor}, if for each vertex $v$, $d_F(v)= f(v)$,
 (iii) {\it $(g,f)$-factor}, if for each vertex $v$, $g(v)\le d_F(v)\le f(v)$,
where $g$ and $f$ are two integer-valued functions on $V(G)$.
We denote by $\omega_{g,f}(G, A,B)$ the number of components $C$ of $G\setminus (A\cup B)$
such that for each vertex $v\in V(C)$, $g(v)=f(v)$ and $d_{G}(V(C),B)\not\stackrel{2}{\equiv}\sum_{v\in V(C)}f(v)$.
For convenience, we write $\omega_{f}(G, A,B)$ instead of $\omega_{g,f}(G, A,B)$ when $g=f$.
A graph $G$ is called 
{\it $m$-tree-connected}, if it contains $m$ edge-disjoint spanning trees. 
Note that by the result of Nash-Williams~\cite{Nash-Williams-1961} and Tutte~\cite{Tutte-1961} every $2m$-edge-connected graph is $m$-tree-connected.
Throughout this article, all variables $k$ and $m$ are nonnegative integers.

%

In 1952 Tutte constructed the following criterion for the existence of $f$-factors. 

\begin{thm}{\rm (\cite{Tutte-1952})}\label{intro:thm:Tutte}
{Let $G$ be a graph and let $f$ be an integer-valued function on $V(G)$. 
Then $G$ has an $f$-factor, if and only if 
for all disjoint subsets $A$ and $B$ of $V(G)$,
$$\omega_{f}(G, A,B)\le \sum_{v\in A} f(v)+\sum_{v\in B} (d_{G\setminus A}(v)-f(v)).$$
}\end{thm}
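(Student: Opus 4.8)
The plan is to treat the two implications separately: the necessity direction is a short counting-and-parity argument, while the sufficiency direction goes through a reduction to Tutte's $1$-factor theorem.

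For necessity, suppose $F$ is an $f$-factor and fix disjoint $A,B\subseteq V(G)$. I would rewrite the right-hand side using $\sum_{v\in A}f(v)=\sum_{v\in A}d_F(v)$ and $\sum_{v\in B}f(v)=\sum_{v\in B}d_F(v)$, then sort all edges of $F$ and of $G$ according to how they meet $A$, $B$, and the components $C$ of $G\setminus(A\cup B)$. After cancellation this expresses the right-hand side as $2e_F(A)+2\bigl(e_G(B)-e_F(B)\bigr)+\sum_{C}\bigl(d_F(A,V(C))+d_G(B,V(C))-d_F(B,V(C))\bigr)$, in which every term is nonnegative. It then remains to check that each component $C$ contributing to $\omega_f(G,A,B)$ makes its bracketed summand at least $1$. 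This is where the parity enters: since $F$ is an $f$-factor, $\sum_{v\in V(C)}f(v)\equiv d_F(A,V(C))+d_F(B,V(C))\pmod 2$, so the defining condition $d_G(V(C),B)\not\equiv\sum_{v\in V(C)}f(v)\pmod 2$ forces $d_F(A,V(C))+\bigl(d_G(B,V(C))-d_F(B,V(C))\bigr)$ to be odd, hence positive. Summing over the counted components yields the inequality.

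For sufficiency I would argue contrapositively through a gadget reduction to matchings. After disposing of the trivial cases $f(v)<0$ or $f(v)>d_G(v)$ (which violate the inequality already for $A=\emptyset$, $B=\{v\}$), I may assume $0\le f(v)\le d_G(v)$ and build an auxiliary graph $H$: subdivide each edge $e=uv$ of $G$ by two new vertices $e_u,e_v$ joined by an edge, discard the original vertices, and for each $v$ attach to the set $A_v=\{e_v:e\ni v\}$ a set $T_v$ of $d_G(v)-f(v)$ fresh vertices, joined to $A_v$ by a complete bipartite graph. A perfect matching of $H$ must saturate each $T_v$ inside $A_v$, leaving exactly $f(v)$ vertices of $A_v$ to be matched across subdivision edges; reading off those edges shows that $H$ has a perfect matching if and only if $G$ has an $f$-factor.

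It then suffices to apply Tutte's $1$-factor theorem to $H$ and translate the resulting condition back to $G$, and this translation is the main obstacle. Tutte's theorem says $H$ has no perfect matching precisely when some $S\subseteq V(H)$ satisfies $o(H-S)>|S|$, where $o$ counts odd components. The plan is to show that a violating $S$ can be pushed to a \emph{canonical} form adapted to the gadget — for each $v$, $S$ should interact with the block $A_v\cup T_v$ in only a few standard ways — without decreasing $o(H-S)-|S|$; this is the step requiring care, since one must argue that any local deviation can be repaired. From a canonical $S$ one extracts the disjoint sets $A,B\subseteq V(G)$ (roughly, $A$ collects the vertices whose $T_v$ lies in $S$ while $A_v$ does not, and $B$ the reverse), and one checks that the odd components of $H-S$ are in bijection with the components $C$ counted by $\omega_f(G,A,B)$, while the surplus in $|S|$ accounts for $\sum_{v\in A}f(v)+\sum_{v\in B}(d_{G\setminus A}(v)-f(v))$. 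Matching the parities so that odd components of $H-S$ correspond exactly to the modular condition defining $\omega_f$ is the delicate bookkeeping; once it is in place, $o(H-S)>|S|$ becomes precisely a failure of the stated inequality, completing the contrapositive.
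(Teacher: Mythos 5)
The paper does not prove this statement at all: it is quoted verbatim from Tutte's 1952 paper (and only a reformulation of it, Lemma~\ref{lem:Tutte}, is actually used later), so there is no in-paper argument to compare yours against. Judged on its own, your necessity direction is complete and correct: the edge-sorting identity you write down is right (the $d_F(A,B)$ terms cancel, every resulting summand is nonnegative), and the parity observation $\sum_{v\in V(C)}f(v)\equiv d_F(A,V(C))+d_F(B,V(C))\pmod 2$ correctly forces each component counted by $\omega_f(G,A,B)$ to contribute at least $1$. This half stands as a proof.

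The sufficiency direction, however, has a genuine gap. The gadget $H$ and the equivalence ``$H$ has a perfect matching iff $G$ has an $f$-factor'' are fine (this is Tutte's classical reduction), but the entire content of the theorem then sits in the step you defer: showing that a set $S$ with $o(H-S)>|S|$ can be normalized so that, within each block $A_v\cup T_v$, it takes one of a few standard shapes \emph{without decreasing} $o(H-S)-|S|$, and that the resulting $A$, $B$, odd-component count, and parities reassemble into a violation of the stated inequality. You name this as ``the step requiring care'' and ``the delicate bookkeeping'' but do not carry it out; there is no argument for why an arbitrary local deviation of $S$ can be repaired, no verification that the odd components of $H-S$ biject with the components counted by $\omega_f(G,A,B)$ (in particular that the parity of a component of $H-S$ matches the condition $d_G(V(C),B)\not\equiv\sum_{v\in V(C)}f(v)\pmod 2$), and no accounting of how $|S|$ produces the term $\sum_{v\in A}f(v)+\sum_{v\in B}(d_{G\setminus A}(v)-f(v))$. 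Until that normalization and bookkeeping are written out, the sufficiency half is a plan for a proof rather than a proof; the plan is the standard and correct one, but what remains is precisely the hard part.
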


In 1970 Lov{\'a}sz generalized Tutte's result to the following bounded degree version.

\begin{thm}{\rm (\cite{Lovasz-1970})}\label{lem:Lovasz}
{Let $G$ be a graph and let $g$ and $f$ be two integer-valued functions on $V(G)$ with $g\le f$. 
Then $G$ has a $(g,f)$-factor, if and only if 
for any disjoint subsets $A$ and $B$ of $V(G)$,
$$\omega_{g,f}(G, A,B)\le \sum_{v\in A} f(v)+\sum_{v\in B} (d_{G\setminus A}(v)-g(v)).$$
}\end{thm}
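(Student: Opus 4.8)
The plan is to derive Lov\'asz's theorem (Theorem \ref{lem:Lovasz}) from Tutte's $f$-factor criterion (Theorem \ref{intro:thm:Tutte}) by an auxiliary-graph reduction, rather than repeating a self-contained alternating-path argument. Necessity is the routine direction: given a $(g,f)$-factor $F$ and disjoint sets $A,B$, every component $C$ counted by $\omega_{g,f}(G,A,B)$ has $g=f$ on $V(C)$, so all its degrees in $F$ are forced to their exact values, and the parity test built into $\omega_{g,f}$ produces an obstruction forcing at least one edge of $F$ to leave $V(C)$ into $A\cup B$ beyond what the trivial bounds $d_F(v)\le f(v)$ on $A$ and $d_F(v)\ge g(v)$ on $B$ already account for. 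Charging these edges and collecting terms yields the inequality, so the substance is sufficiency.

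For sufficiency I would construct from $G$ a graph $G^{*}$ together with an integer function $f^{*}$ so that $G^{*}$ has an $f^{*}$-factor if and only if $G$ has a $(g,f)$-factor. The construction attaches to each vertex $v$ a small gadget that absorbs the slack $f(v)-g(v)$: one introduces auxiliary vertices joined to $v$ so that, in any $f^{*}$-factor, the number of original edges of $G$ met at $v$ can realize any value in the interval $[g(v),f(v)]$, while the gadget vertices are always internally satisfiable and contribute no constraint of their own. The correctness of this equivalence is a direct check: an $f^{*}$-factor restricted to $E(G)$ is a $(g,f)$-factor of $G$, and conversely any $(g,f)$-factor extends through the gadgets to a full $f^{*}$-factor.

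The heart of the argument is then to translate Tutte's inequality for $(G^{*},f^{*})$ into Lov\'asz's inequality for $(G,g,f)$. Applying Theorem \ref{intro:thm:Tutte} to $G^{*}$ gives a condition quantified over all disjoint $A^{*},B^{*}\subseteq V(G^{*})$. The key step is to show it suffices to consider pairs $A^{*},B^{*}$ that interact with each gadget in a canonical, extremal way; for such pairs one may assume the gadget vertices are assigned to $A^{*}$, to $B^{*}$, or to neither in the configuration that minimizes Tutte's surplus, and this canonical choice is exactly what converts $\sum_{A^{*}}f^{*}$ together with $\sum_{B^{*}}(d_{G^{*}\setminus A^{*}}-f^{*})$ into $\sum_{v\in A} f(v)+\sum_{v\in B}(d_{G\setminus A}(v)-g(v))$. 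One must also match the component counts, identifying each component of $G\setminus(A\cup B)$ on which $g=f$ and the parity test of $\omega_{g,f}$ fails with a component of $G^{*}\setminus(A^{*}\cup B^{*})$ contributing to $\omega_{f^{*}}$, and conversely.

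I expect the main obstacle to be precisely this reduction of Tutte's quantifier to its canonical extremal form: verifying that an arbitrary pair $(A^{*},B^{*})$ can be pushed, without increasing the Tutte surplus, to one arising from a pair $(A,B)$ in $G$, and that the gadget contributions cancel in the degree-and-parity bookkeeping so that the two $\omega$-counts agree term by term. The degree corrections coming from edges between $B$ and $A$ (the discrepancy between $d_{G\setminus A}$ and $d_{G}$) and the parity tests defining $\omega_{g,f}$ are where sign and parity errors are easiest to commit, so I would track those computations most carefully.
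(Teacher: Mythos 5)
This statement is quoted from Lov\'asz (1970); the paper gives no proof of it at all, so there is nothing internal to compare against. Judged on its own terms, your proposal is an outline of a legitimate and well-known route (deducing the $(g,f)$-factor criterion from Tutte's $f$-factor theorem by a vertex-gadget reduction), but as written it has a genuine gap: the entire technical content of the sufficiency direction is deferred. You never specify the gadget --- how many auxiliary vertices are attached to $v$, with what multiplicities and what values of $f^{*}$ --- and the choice is not innocuous. The gadget must simultaneously (i) let $v$ meet any number of $G$-edges in $[g(v),f(v)]$ while the gadget remains internally satisfiable, (ii) for $v\in B^{*}$ convert the Tutte term $d_{G^{*}\setminus A^{*}}(v)-f^{*}(v)$ into $d_{G\setminus A}(v)-g(v)$ under the canonical placement of the auxiliary vertices, and (iii) destroy the parity obstruction for every component $C$ of $G\setminus(A\cup B)$ containing a vertex with $g(v)<f(v)$, so that only components with $g\equiv f$ survive into $\omega_{g,f}$. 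You explicitly flag the reduction of Tutte's quantifier over $(A^{*},B^{*})$ to pairs arising from $(A,B)$ as ``the main obstacle,'' which is an accurate self-assessment: that monotone pushing argument, together with the term-by-term matching of the two $\omega$-counts, is where the proof actually lives, and it is absent.

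Two further points to watch if you carry this out. First, Tutte's theorem as stated in the paper (Theorem~\ref{intro:thm:Tutte}) already has the $\omega_f$ correction term, so the bookkeeping must track how gadget components of $G^{*}\setminus(A^{*}\cup B^{*})$ (isolated auxiliary vertices, or auxiliary vertices glued to components of $G\setminus(A\cup B)$) enter or fail to enter $\omega_{f^{*}}$; a careless gadget produces spurious odd components that break the count. Second, your necessity sketch is essentially the standard parity-charging argument and is fine in outline, but the charging step (each bad component forces either an $F$-edge into $A$ or a non-$F$-edge of $G\setminus A$ at $B$, and these are absorbed by $\sum_{A}f$ and $\sum_{B}(d_{G\setminus A}-g)$ respectively, via $d_F\le f$ on $A$ and $d_F\ge g$ on $B$) should be written out, since that is where the sign conventions in $\omega_{g,f}$ are tested.
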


In 2014 Thomassen~\cite{Thomassen-2014} introduced modulo factors and gave a sufficient edge-connectivity for the existence of $f$-factors modulo $k$ in bipartite graphs. Next, his result was developed to graphs with bipartite index at least $k-1$ by 
Thomassen, Wu, and Zhang (2016)~\cite{Thomassen-Wu-Zhang-2016}.
Recently, the present author developed their results to a bounded degree version and derived the following theorem.
In Section~\ref{sec:factors}, we generalize this result in order to guarantee the existence of $\{g,f\}$-factors in edge-connected graphs as mentioned in the abstract.
\begin{thm}{\rm (\cite{ModuloFactorBounded})}\label{intro:thm:modulo-factor}
{Let $G$ be a $6k$-tree-connected graph, $k\ge 1$, and let $f$ be a positive integer-valued function on $V(G)$. 
Assume that $f$ is compatible with $G$ (modulo $k$).
If for each vertex $v$, $f(v)\le \frac{1}{2}d_G(v)\le f(v)+k$,
then $G$ has a factor $F$ such that for each vertex $v$,
$$d_F(v)\in \{f(v),f(v)+k\}.$$
}\end{thm}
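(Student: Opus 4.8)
The plan is to split the two-valued degree prescription into a degree-window condition and a single modular condition, to dispatch the window with Lov\'asz's theorem, and to reserve the full strength of the $6k$-tree-connectivity for the modular part, where the compatibility hypothesis is exactly the obstruction that must be removed. The key initial observation is that the integer interval $[f(v),f(v)+k]$ contains precisely two integers congruent to $f(v)$ modulo $k$, namely $f(v)$ and $f(v)+k$. Hence it suffices to produce a factor $F$ satisfying simultaneously $f(v)\le d_F(v)\le f(v)+k$ and $d_F(v)\equiv f(v)\pmod{k}$ for every vertex $v$, since together these force $d_F(v)\in\{f(v),f(v)+k\}$. That compatibility is the right obstruction is seen by a necessity check: for any bipartition $(X,Y)$ one has $\sum_{v\in X}d_F(v)=2e_F(X)+d_F(X,Y)$ and $\sum_{v\in Y}d_F(v)=2e_F(Y)+d_F(X,Y)$, so $d_F(v)\equiv f(v)\pmod k$ forces $\sum_{v\in X}f(v)-2e_F(X)\equiv\sum_{v\in Y}f(v)-2e_F(Y)\pmod k$, which is the compatibility condition with $x=e_F(X)$ and $y=e_F(Y)$. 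The theorem asserts that, inside the width-$k$ window, $6k$-tree-connectivity makes this necessary condition sufficient.

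For the window part I would invoke Theorem~\ref{lem:Lovasz} with lower function $f$ and upper function $f+k$. Here the relevant quantity $\omega_{f,f+k}(G,A,B)$ counts components on which $f(v)=f(v)+k$ for every vertex, which is impossible when $k\ge 1$; hence $\omega_{f,f+k}(G,A,B)=0$ identically. Lov\'asz's inequality then reduces to the assertion that $\sum_{v\in A}(f(v)+k)+\sum_{v\in B}\big(d_{G\setminus A}(v)-f(v)\big)\ge 0$, which I would verify directly from the degree hypothesis $f(v)\le \tfrac12 d_G(v)$ together with the large degrees forced by $6k$-edge-connectivity. This yields a first approximation $F_0$ with $f(v)\le d_{F_0}(v)\le f(v)+k$ for all $v$, but with uncontrolled residues.

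The heart of the argument is the modular correction of $F_0$. For each vertex $v$ write $d_{F_0}(v)=f(v)+r_v$ with $0\le r_v\le k$; the unique target in $\{f(v),f(v)+k\}$ nearest to $d_{F_0}(v)$ differs from it by $-r_v$ or by $k-r_v$, both of magnitude at most $k$ and both landing back inside the window. Realizing these prescribed degree changes amounts to toggling membership in $F_0$ along a subgraph whose degree at each $v$ is congruent to $-r_v\pmod k$, i.e.\ to solving an auxiliary modulo-$k$ degree-prescription problem. I would route these corrections along the $6k$ edge-disjoint spanning trees supplied by the tree-packing theorem of Nash-Williams~\cite{Nash-Williams-1961} and Tutte~\cite{Tutte-1961}: the trees provide the connectivity needed to balance residues across the graph, while the compatibility of $f$ with $G$ guarantees that no global residue obstruction survives on any bipartition. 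Finally I would confirm that, because every correction alters each degree by a multiple of $k$ and the nearest target is always within the window, the corrected factor $F$ still satisfies $f(v)\le d_F(v)\le f(v)+k$.

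The main obstacle, I expect, is that this modular correction is itself essentially a modulo-$k$ factor problem, so it cannot be treated as a mere postprocessing step; it is where the entire $6k$ budget must be spent and accounted for. To control it I would most plausibly argue by induction on $|E(G)|$ in the style of Thomassen~\cite{Thomassen-2014}, using the tree-connectivity to locate a reducible configuration---splitting off a vertex or deleting an Eulerian subgraph---so that the tree-connectivity, the degree window, and the compatibility condition all descend to a smaller instance, and then lifting a solution back. The delicate point will be the bookkeeping of the $6k$ trees: enough to keep the reduced graph sufficiently tree-connected, enough to maintain the half-degree balance underlying the window, and enough slack to realize the residue corrections without ever leaving $[f(v),f(v)+k]$.
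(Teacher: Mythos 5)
This statement is quoted from \cite{ModuloFactorBounded} and is not proved in the present paper, so there is no in-paper proof to compare against; judged on its own terms, your proposal has a genuine gap. The first stage (getting an $(f,f+k)$-factor $F_0$ from Theorem~\ref{lem:Lovasz}) is plausible and probably checkable, but it is also the trivial part: the entire content of the theorem is the residue prescription $d_F(v)\equiv f(v)\pmod k$, and your ``modular correction'' of $F_0$ is exactly a modulo-$k$ degree-prescription problem of the same difficulty as the original. You acknowledge this yourself and then defer it to ``induction on $|E(G)|$ in the style of Thomassen,'' without identifying a reducible configuration, saying how tree-connectivity, the degree window, and compatibility descend to the smaller instance, or how a solution lifts back. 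That deferral is where the theorem actually lives, so the proposal is a strategy outline rather than a proof. There is also a concrete technical error in the correction step as described: if you modify $F_0$ by taking the symmetric difference with an edge set $S$, the degree at $v$ changes by $d_S(v)-2d_{S\cap F_0}(v)$, not by $d_S(v)$, so ``a subgraph whose degree at each $v$ is congruent to $-r_v\pmod k$'' is not the object you need, and controlling the sign of the change (so as to stay inside $[f(v),f(v)+k]$) requires controlling how $S$ meets $F_0$, which the sketch does not address.

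For what it is worth, the machinery this paper and its companions actually use for such results is different in kind: one does not post-process a Lov\'asz factor, but instead decomposes $G$ into a highly tree-connected bipartite factor and an Eulerian factor (Lemmas~\ref{lem:decomposition:Eulerian} and~\ref{lem:decomposition:bipartite-index}), converts the factor problem on the bipartite part into an orientation problem via Lemma~\ref{lem:factor-orientation} and solves it with $\{\p,\q\}$-orientation theorems such as Theorems~\ref{thm:exact-p,q} and~\ref{thm:z-defective-p,q}, and then repairs a single vertex on the Eulerian part using Tutte's theorem in the form of Theorem~\ref{thm:Eulerian}/Corollary~\ref{cor:Eulerian}; the compatibility hypothesis enters to choose the target residues globally. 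If you want to complete your argument, you would either have to carry out the Thomassen-style induction in full or switch to this orientation-based route.
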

In~\cite{complementary}, we developed Theorem~\ref{intro:thm:modulo-factor} to the following tree-connected version. 
In Section~\ref{sec:tree-connected-factors}, we generalize this result in order to guarantee the existence of tree-connected $\{g,f\}$-factors in edge-connected graphs.
\begin{thm}{\rm (\cite{complementary})}\label{intro:thm:f-f+k}
{Let $G$ be a $(2m+2m_0+6k)$-tree-connected graph, $k\ge m+m_0> 0$, and let $f$ be a positive integer-valued function on $V(G)$. Assume that $f$ is compatible with $G$ (modulo $k$).
If for each vertex $v$, $f(v)+m_0\le \frac{1}{2}d_G(v)\le f(v)+k-m$,
then $G$ has an $m$-tree-connected factor $H$ such that its complement is $m_0$-tree-connected and for each vertex $v$,
$$d_H(v)\in \{f(v),f(v)+k\}.$$
}\end{thm}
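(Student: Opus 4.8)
The plan is to reduce Theorem~\ref{intro:thm:f-f+k} to Theorem~\ref{intro:thm:modulo-factor} by peeling off the required tree-connected structure \emph{before} applying the modulo-factor result. Concretely, I would first invoke a degree-bounded tree-connected decomposition to split $E(G)$ into three edge-disjoint spanning subgraphs $R_1$, $R_2$, $G_0$ such that $R_1$ is $m$-tree-connected with $d_{R_1}(v)\le 2m$, $R_2$ is $m_0$-tree-connected with $d_{R_2}(v)\le 2m_0$, and $G_0$ is $6k$-tree-connected. Since $G$ is $(2m+2m_0+6k)$-tree-connected, this can be arranged in two stages: extract $R_1$ from the $(2m+(2m_0+6k))$-tree-connected graph $G$, leaving a $(2m_0+6k)$-tree-connected graph, then extract $R_2$, leaving $G_0$. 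The point is that $R_1$ will be forced into $H$ and $R_2$ into its complement, so that the two tree-connectivity requirements hold automatically, while the degree window is taken care of by the core $G_0$.

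Next I would apply Theorem~\ref{intro:thm:modulo-factor} to $G_0$ with the shifted function $f_0(v):=f(v)-d_{R_1}(v)$, obtaining a factor $F_0\subseteq G_0$ with $d_{F_0}(v)\in\{f_0(v),f_0(v)+k\}$, and then set $H:=R_1\cup F_0$. Then $d_H(v)=d_{R_1}(v)+d_{F_0}(v)\in\{f(v),f(v)+k\}$; moreover $H\supseteq R_1$ is $m$-tree-connected and $G\setminus H=(G_0\setminus F_0)\cup R_2\supseteq R_2$ is $m_0$-tree-connected, which is exactly the conclusion. It therefore remains to verify that the hypotheses of Theorem~\ref{intro:thm:modulo-factor} transfer to the pair $(G_0,f_0)$.

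For positivity, note that $(2m+2m_0+6k)$-tree-connectivity forces $d_G(v)\ge 2m+2m_0+6k$, which together with $\tfrac{1}{2}d_G(v)\le f(v)+k-m$ yields $f(v)\ge 2m+m_0+2k$; hence $f_0(v)\ge m_0+2k\ge 1$. For the degree window, writing $d_{G_0}(v)=d_G(v)-d_{R_1}(v)-d_{R_2}(v)$ one sees that the inequalities $f_0(v)\le\tfrac{1}{2}d_{G_0}(v)$ and $\tfrac{1}{2}d_{G_0}(v)\le f_0(v)+k$ are equivalent, respectively, to $d_{R_2}(v)-d_{R_1}(v)\le d_G(v)-2f(v)$ and $d_{R_1}(v)-d_{R_2}(v)\le 2f(v)+2k-d_G(v)$. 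Since $d_G(v)-2f(v)\ge 2m_0\ge d_{R_2}(v)-d_{R_1}(v)$ and $2f(v)+2k-d_G(v)\ge 2m\ge d_{R_1}(v)-d_{R_2}(v)$ by the degree hypotheses and the bounds $d_{R_1}(v)\le 2m$, $d_{R_2}(v)\le 2m_0$, both hold.

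The step carrying the real content is twofold, and is where I expect the main difficulty. First, one must secure the degree-bounded decomposition lemma itself, namely that an $(2a+\ell)$-tree-connected graph admits an $a$-tree-connected spanning subgraph of maximum degree at most $2a$ whose complement stays $\ell$-tree-connected; this is the engine of the argument and is where the coefficient $2m+2m_0+6k$ is spent. Second, I must check that compatibility of $f$ with $G$ modulo $k$ passes to compatibility of $f_0$ with $G_0$ modulo $k$. Expanding the definition, for a bipartition $(X,Y)$ the target congruence for $f_0$ differs from that for $f$ precisely by $2\bigl(e_{R_1}(X)-e_{R_1}(Y)\bigr)$, so one must re-choose the witnesses $x_0\le e_{G_0}(X)$ and $y_0\le e_{G_0}(Y)$ to absorb this shift; here the $6k$-tree-connectivity of $G_0$ furnishes the edge slack needed to accommodate the bounded contributions of $R_1$ and $R_2$ and to realize the required residues. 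I anticipate this compatibility bookkeeping to be the most delicate part of the proof.
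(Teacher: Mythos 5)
This theorem is quoted from reference~\cite{complementary} and is not actually proved in the present paper, so the only fair comparison is with the technique the author uses for the analogous results in Section~\ref{sec:tree-connected-factors}. Your high-level plan --- peel off the tree-connected parts $R_1$, $R_2$ first, apply Theorem~\ref{intro:thm:modulo-factor} to the remaining core with a shifted function, and take the union --- is indeed the right shape and matches that technique. But the engine you rely on is false. There is no lemma saying that a $(2a+\ell)$-tree-connected graph contains an $a$-tree-connected spanning subgraph of maximum degree at most $2a$ whose complement is $\ell$-tree-connected. Already for $a=1$ this would assert that every sufficiently tree-connected graph has a spanning tree of maximum degree $2$, i.e.\ a Hamiltonian path; the complete bipartite graph $K_{c,n}$ with $c$ fixed and $n$ large is roughly $c$-tree-connected yet has no spanning tree of maximum degree $2$. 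More structurally, an $a$-tree-connected factor on $n$ vertices has at least $a(n-1)$ edges, so a maximum degree of $2a$ forces it to be essentially $2a$-regular, which high tree-connectivity of $G$ cannot guarantee. The correct substitute --- and what the paper itself uses in the proofs of the theorems in Section~\ref{sec:tree-connected-factors} --- is Lemma~\ref{lem:m-m0} applied to a spanning $(2m+2m_0)$-edge-connected \emph{Eulerian} factor $G_1$ (extracted via Jaeger's theorem), which gives $H'$ with the \emph{relative} bounds $d_{G_1}(v)/2-m_0\le d_{H'}(v)\le d_{G_1}(v)/2+m$. With $f'(v)=f(v)-d_{H'}(v)$ the window $f'(v)\le \tfrac12 d_{G_2}(v)\le f'(v)+k$ then follows from exactly the hypotheses $f(v)+m_0\le\tfrac12 d_G(v)\le f(v)+k-m$, with no absolute degree bound needed.

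Your second flagged step, the transfer of compatibility from $(G,f)$ to $(G_0,f_0)$, is also a genuine unresolved gap rather than mere bookkeeping: the witnesses $x,y$ for a bipartition $X,Y$ live in $[0,e_G(X)]\times[0,e_G(Y)]$, while after the shift by $2\bigl(e_{R_1}(X)-e_{R_1}(Y)\bigr)$ you must find new witnesses in the smaller ranges $[0,e_{G_0}(X)]\times[0,e_{G_0}(Y)]$, and nothing in your setup guarantees the needed residue is still reachable when $e_{G_0}(X)$ or $e_{G_0}(Y)$ is small. The machinery of Sections 2--3 of this paper (Corollary~\ref{cor:Eulerian}, Lemmas~\ref{lem:decomposition:bipartite-index} and~\ref{lem:bound:bipartite-index}) exists precisely to route these modular corrections through a designated Eulerian factor with controlled bipartite index at a single vertex $z$, rather than transferring compatibility wholesale. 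As written, your argument does not close either gap.
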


In 1990 Katerinis~\cite{Katerinis-1990} gave a sufficient toughness condition for the existence of $f$-factors.
By applying his result, we formulated the following sufficient toughness condition for the existence of tree-connected $\{f, f+1\}$-factors. 
\begin{thm}{\rm (\cite{ClosedTrails})}\label{intro:thm:lem:f-f+1}
{Let $G$ be a graph, let $b$ be a positive integer, and 
let $f$ be a positive integer-valued function on $V(G)$ satisfying $2m\le f\le b$.
If $G$ is $b^2$-tough and $|V(G)|\ge b^2$, then it has an $m$-tree-connected $\{f, f+1\}$-factor. 
}\end{thm}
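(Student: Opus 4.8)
The plan is to obtain the factor in two stages: first produce some $\{f,f+1\}$-factor by feeding the toughness hypothesis into Katerinis's theorem, and then promote it to an $m$-tree-connected one by exploiting the very high connectivity that the toughness of $G$ simultaneously provides.

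First I would record the connectivity consequences of the hypotheses. Since $G$ is $b^2$-tough, any vertex cut $S$ with $\omega(G\setminus S)\ge 2$ satisfies $|S|\ge b^2\,\omega(G\setminus S)\ge 2b^2$, so $G$ is $2b^2$-connected and hence $2b^2$-edge-connected (complete graphs being trivially connected enough); by the Nash-Williams and Tutte theorem quoted above, $G$ is therefore $b^2$-tree-connected. Because $2m\le f\le b$ we have $2m\le b\le b^2$, so in particular $G$ is $2m$-tree-connected and every vertex satisfies $d_G(v)\ge 2b^2\ge 2f(v)$; this last inequality is what keeps the degree demands below within the available degree.

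Next I would construct a first factor. Choose $h(v)\in\{f(v),f(v)+1\}$ so that $\sum_{v}h(v)$ is even, which is possible by toggling the choice at a single vertex (the graph is nonempty since $|V(G)|\ge b^2$). The function $h$ is positive and bounded by $b+1$; since $G$ is $b^2$-tough with $b^2\ge b+1$ and $|V(G)|\ge b^2$, the toughness hypothesis of Katerinis's theorem is comfortably satisfied, so $G$ has an $h$-factor $H_0$. In particular $H_0$ is an $\{f,f+1\}$-factor with $d_{H_0}(v)\ge f(v)\ge 2m$ at every vertex.

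The remaining task, and the genuine difficulty, is to upgrade $H_0$ to an $m$-tree-connected $\{f,f+1\}$-factor without destroying the degree constraints. Here I would not reserve $m$ spanning trees and work in the residual graph, because deleting a spanning subgraph can annihilate toughness; instead I would keep working inside the fixed, highly connected host $G$. Using the partition form of the Nash-Williams and Tutte theorem, an $\{f,f+1\}$-factor $H$ fails to be $m$-tree-connected exactly when some partition $\mathcal P=\{V_1,\dots,V_p\}$ of $V(G)$ carries fewer than $m(p-1)$ edges of $H$ across its parts. I would take an $\{f,f+1\}$-factor maximizing the number of cross edges over such partitions and show that a violating partition yields an $H$-alternating closed walk through the deficient cut, available because $G$ is $2b^2$-edge-connected while $H$ spends only $O(m)$ of its degree across that cut; swapping edges in and out of $H$ along this walk strictly raises the cross-edge count while keeping every degree inside $\{f(v),f(v)+1\}$, using the unit of slack supplied by the $+1$. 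The main obstacle is precisely to certify that such a degree-preserving, tree-connectivity-increasing swap exists for every violating partition, and it is here that the quadratic toughness $b^2$ (equivalently, edge-connectivity of order $b^2\gg 2m$) and the size bound $|V(G)|\ge b^2$ are consumed: the former supplies enough independent cross edges of $G$ to reroute along, and the latter meets Katerinis's hypotheses at the outset. Since the cross-edge potential is bounded, the swapping terminates, and the resulting factor is the desired $m$-tree-connected $\{f,f+1\}$-factor.
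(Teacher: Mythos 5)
The paper does not actually prove this statement: it is imported verbatim from \cite{ClosedTrails}, so there is no in-paper argument to measure you against. Judged on its own terms, your proposal breaks down at the step that carries the entire content of the theorem. Stages one and two (deriving edge-connectivity from toughness, then invoking Katerinis to get some $\{f,f+1\}$-factor $H_0$) are routine, but stage three --- upgrading $H_0$ to an $m$-tree-connected $\{f,f+1\}$-factor by alternating-walk swaps --- is not a proof. You yourself name the decisive claim (``the main obstacle is precisely to certify that such a degree-preserving, tree-connectivity-increasing swap exists for every violating partition'') and then do not certify it. Concretely: the potential you propose to maximize, ``the number of cross edges over such partitions,'' is not well defined, since the family of violating partitions depends on the current factor $H$; an alternating closed walk swap preserves degrees but nothing in your argument shows it can be routed to strictly increase the number of $H$-edges across a \emph{given} deficient partition cut, and with only one unit of slack per vertex most local augmentations are blocked; and even granting a local improvement across one partition, you give no reason it cannot decrease the count across another or create new violating partitions, so termination is asserted rather than proved. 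Since the whole point of the theorem is to obtain $m$ edge-disjoint spanning trees inside a factor whose degrees are pinned to within one unit of $f$, leaving this step as a heuristic means the proposal does not establish the result.

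Two secondary points also need repair. First, your claim that $b^2$-toughness forces $2b^2$-edge-connectivity fails exactly in the regime the hypotheses permit: a complete graph on $n$ vertices with $b^2\le n\le 2b^2$ is $b^2$-tough (by convention no cutset disconnects it) but only $(n-1)$-edge-connected, so the quantitative connectivity you lean on in stage three is not available there. Second, you apply Katerinis's theorem to a function $h$ bounded by $b+1$, not $b$, and assert the toughness hypothesis is ``comfortably satisfied''; if the underlying condition scales as the square of $\max h$, then $b^2$-toughness is short of $(b+1)^2$, and this needs to be checked against the actual statement rather than waved through. The line of work this theorem comes from avoids your difficulty by a different architecture --- extracting tree-connected factors with explicitly bounded degrees and then solving a residual degree-prescription problem via Lov\'asz/Katerinis-type criteria --- rather than by post hoc swapping, and that is the kind of mechanism your argument would need to supply.
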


In~\cite{f-f+k}, we formulated the following sufficient toughness condition for the existence of tree-connected $\{f, f+k\}$-factors  using Theorem~\ref{intro:thm:f-f+k}. 
In Section~\ref{sec:tough-graphs}, by applying the recent development of Theorem~\ref{intro:thm:f-f+k}, we derive a sufficient toughness condition for the existence of tree-connected $\{g,f\}$-factors.
\begin{thm}{\rm (\cite{f-f+k})}\label{intro:thm:tree-connected}
{Let $G$ be a graph and let $f$ be a positive integer-valued function on $V(G)$ 
satisfying $3m+2m_0+6k<  f +k\le b$ and $m+m_0<k$, where $k$, $b$, $m$, and $m_0$ are four nonnegative integers.
Assume that $(k-1)\sum_{v\in V(G)}f(v)$ is even. If $G$ is $4b^2$-tough and $|V(G)|\ge 4b^2$, then $G$ has an $m$-tree-connected factor $H$ such that its complement is $m_0$-tree-connected and for each vertex $v$, $$d_H(v)\in \{f(v),f(v)+k\}.$$
}\end{thm}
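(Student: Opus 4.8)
The plan is to deduce the statement from the tree-connected modulo-factor result, Theorem~\ref{intro:thm:f-f+k}, which produces exactly an $m$-tree-connected factor $H$ with $m_0$-tree-connected complement and $d_H(v)\in\{f(v),f(v)+k\}$. The obstruction to applying it to $G$ directly is its degree hypothesis $f(v)+m_0\le \tfrac{1}{2} d_G(v)\le f(v)+k-m$: toughness forces the degrees of $G$ to be \emph{too large}, so this window is violated. Indeed, assuming $G$ is not complete, a $4b^2$-tough graph satisfies $\kappa(G)\ge 8b^2$, hence $\delta(G)\ge\kappa(G)\ge 8b^2$, whereas $f(v)+k\le b$ makes the required upper bound on $\tfrac{1}{2} d_G(v)$ only of order $b$. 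The remedy is to first pass to a factor $G_0\subseteq G$ whose degrees lie in the window $[\,2f(v)+2m_0,\ 2f(v)+2k-2m\,]$ and which is itself $(2m+2m_0+6k)$-tree-connected, and then invoke Theorem~\ref{intro:thm:f-f+k} on $G_0$. This window is nonempty of width $2(k-m-m_0)\ge 2$, since $m+m_0<k$.

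Existence of a factor with degrees in the window is the easy half. Apply Lov{\'a}sz's theorem (Theorem~\ref{lem:Lovasz}) with lower bound $g^*(v)=2f(v)+2m_0$ and upper bound $f^*(v)=2f(v)+2k-2m$. Because $g^*<f^*$ at every vertex, no component $C$ of $G\setminus(A\cup B)$ can have $g^*=f^*$ throughout, so $\omega_{g^*,f^*}(G,A,B)=0$ for all disjoint $A,B$; it then remains only to check that $\sum_{A}f^*(v)+\sum_{B}(d_{G\setminus A}(v)-g^*(v))$ is nonnegative, which follows from $\delta(G)\ge 8b^2$ dominating $g^*(v)<2b$. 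Hence a degree-windowed factor exists.

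The crux is to secure a degree-windowed factor that is \emph{also} $(2m+2m_0+6k)$-tree-connected; this is where toughness and the bound $|V(G)|\ge 4b^2$ are really used. By Nash-Williams~\cite{Nash-Williams-1961} and Tutte~\cite{Tutte-1961}, the bound $\lambda(G)\ge\kappa(G)\ge 8b^2$ already makes $G$ itself $4b^2$-tree-connected, so the task is to thin $G$ down to the degree window while retaining $2m+2m_0+6k$ edge-disjoint spanning trees. I expect this to be the main obstacle: naive degree counting fails, because after thinning to degrees of order $2b$ the trivial partition bound on cross-edges is vacuous, so the surviving tree-connectivity must be extracted from the global toughness of $G$ rather than from local degrees. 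Concretely, I would verify the Nash-Williams--Tutte partition criterion for the chosen factor $G_0$ by using toughness to control, for every partition of $V(G)$, the number of parts against the edges that must cross between them; the reserve of connectivity $4b^2$ versus the modest target $2m+2m_0+6k$ (both dominated by $b$) is what leaves room for this. Equivalently, one may quote the degree-bounded tree-connected factor machinery for highly tree-connected graphs and feed it the toughness-based connectivity of $G$.

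With such a $G_0$ in hand, two checks remain before applying Theorem~\ref{intro:thm:f-f+k}. First, compatibility of $f$ with $G_0$ modulo $k$: for a bipartition $(X,Y)$ one must realise $\sum_{X}f(v)-\sum_{Y}f(v)$ as $2x-2y$ modulo $k$ with $0\le x\le e_{G_0}(X)$ and $0\le y\le e_{G_0}(Y)$. Since $\delta(G_0)$ and $|V(G)|$ are large, at least one of $e_{G_0}(X),e_{G_0}(Y)$ is large, so $2x-2y$ ranges over all \emph{even} residues modulo $k$. When $k$ is odd, $2$ is invertible and all residues are reached; when $k$ is even, the target $\sum_{X}f(v)-\sum_{Y}f(v)\equiv \sum_{v\in V(G)}f(v)\pmod{2}$ must itself be even, which is exactly the hypothesis that $(k-1)\sum_{v\in V(G)}f(v)$ be even, since $k-1$ is odd precisely when $k$ is even. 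Thus $f$ is compatible with $G_0$ modulo $k$. Second, the numerical hypotheses of Theorem~\ref{intro:thm:f-f+k} hold: the degree condition is the defining property of the window, $k\ge 1$ follows from $m+m_0<k$ with $m,m_0\ge 0$, and we may assume $m+m_0>0$ (the degenerate case $m=m_0=0$ being handled identically through the modulo-factor result, Theorem~\ref{intro:thm:modulo-factor}). Applying Theorem~\ref{intro:thm:f-f+k} to $G_0$ yields an $m$-tree-connected $H$ with $d_H(v)\in\{f(v),f(v)+k\}$ whose complement in $G_0$ is $m_0$-tree-connected; since the complement of $H$ in $G$ contains the complement in $G_0$, and adding edges preserves $m_0$-tree-connectivity, the complement of $H$ in $G$ is $m_0$-tree-connected as well, completing the argument.
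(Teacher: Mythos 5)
Your overall architecture is right --- thin $G$ down to a factor $G_0$ whose degrees sit in the window required by Theorem~\ref{intro:thm:f-f+k}, then apply that theorem --- but you leave the two steps that actually carry the weight of the argument unproven, and one of your supporting claims is false. First, the tree-connected degree-windowed factor. You correctly identify this as ``the main obstacle,'' but your proposed resolution (take an arbitrary Lov\'asz $(g^*,f^*)$-factor from Theorem~\ref{lem:Lovasz} and then verify the Nash-Williams--Tutte partition criterion for it using toughness) does not work: Lov\'asz's theorem gives no control whatsoever over which factor you get, and a $(g^*,f^*)$-factor of a tough graph can easily be disconnected. Your alternative suggestion to ``quote the degree-bounded tree-connected factor machinery for highly tree-connected graphs'' runs into exactly the obstruction you yourself noted at the outset: edge-connectivity-based results such as Theorem~\ref{intro:thm:f-f+k} require $\tfrac{1}{2}d_G(v)$ to already be close to the target, which toughness forbids. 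The tool that closes this gap is Theorem~\ref{intro:thm:lem:f-f+1}, stated in the introduction: applied with $h=f-m-1$ (so $2m'\le 2h\le b'$ for $m'=2m+2m_0+6k$ and $b'=2b-2m-2$), it produces an $m'$-tree-connected $\{2h,2h+1\}$-factor $G'$ directly from the toughness hypothesis. This is the step your proof is missing.

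Second, compatibility modulo $k$. Your claim that ``since $\delta(G_0)$ and $|V(G)|$ are large, at least one of $e_{G_0}(X),e_{G_0}(Y)$ is large'' is simply wrong: $G_0$ could be bipartite (or nearly so) with bipartition $(X,Y)$, in which case $e_{G_0}(X)=e_{G_0}(Y)=0$ no matter how large the degrees and the vertex count are, and then $2x-2y$ is forced to be $0$ and compatibility fails. What is actually needed is $bi(G_0)\ge k-1$, i.e.\ $e_{G_0}(X)+e_{G_0}(Y)\ge k-1$ for \emph{every} bipartition, and this must be engineered: the paper uses toughness a second time, via Lemma~\ref{lem:factor-bipartite-index}, to find a matching $M$ of size $k-1$ with $bi(G'\cup M)\ge k-1$, and sets $G_0=G'\cup M$ (the two added edges per vertex are absorbed by the degree window, which is why $h$ is taken to be $f-m-1$ rather than $f-m$). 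Your parity analysis of when the even residues $2x-2y$ cover the required residue class --- tying $k$ even to the hypothesis that $(k-1)\sum_v f(v)$ is even --- is correct, but it only applies once the bipartite index has been secured. With Theorem~\ref{intro:thm:lem:f-f+1} and Lemma~\ref{lem:factor-bipartite-index} inserted, your argument becomes the paper's.
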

%
%
%
%
%
%
%
%
\section{Preliminary results}

\subsection{Factors in Eulerian graphs with bounded bipartite index}
In this subsection, we shall state a result that is useful for finding factors in Eulerian graphs whose degrees are close to the half of the correspondence degrees in the main graph.
For this purpose, we need the following reformulation of Theorem~\ref{intro:thm:Tutte}.
\begin{lem}{\rm (\cite{Tutte-1952})}\label{lem:Tutte}
{Let $G$ be a connected graph and let $f$ be an integer-valued function on $V(G)$ with $\sum_{v\in V(G)}f(v)$ even. 
Then $G$ has an $f$-factor, if and only if 
for all disjoint subsets $A$ and $B$ of $V(G)$ with $A\cup B\neq \emptyset$,
$$\omega_{f}(G, A,B)<2+ \sum_{v\in A} f(v)+\sum_{v\in B} (d_{G\setminus A}(v)-f(v)).$$
}\end{lem}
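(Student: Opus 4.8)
The plan is to derive this reformulation directly from Tutte's criterion (Theorem~\ref{intro:thm:Tutte}) by showing that, under the hypotheses that $G$ is connected and $\sum_{v\in V(G)}f(v)$ is even, the stated strict inequality (restricted to $A\cup B\neq\emptyset$) is equivalent to Tutte's inequality (stated for all disjoint $A,B$). Write $\theta(A,B)=\sum_{v\in A}f(v)+\sum_{v\in B}(d_{G\setminus A}(v)-f(v))$ for the right-hand side of Tutte's condition. Since both $\omega_{f}(G,A,B)$ and $\theta(A,B)$ are integers, the strict inequality $\omega_{f}(G,A,B)<2+\theta(A,B)$ is the same as $\omega_{f}(G,A,B)\le \theta(A,B)+1$.

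One direction is immediate: if Tutte's inequality $\omega_{f}(G,A,B)\le\theta(A,B)$ holds for all disjoint $A,B$, then in particular it holds when $A\cup B\neq\emptyset$, where $\omega_{f}(G,A,B)\le\theta(A,B)<2+\theta(A,B)$; so an $f$-factor forces the new condition. For the converse I would first dispose of the omitted case $A=B=\emptyset$. Here $\theta(\emptyset,\emptyset)=0$, and because $B=\emptyset$ the unique component of $G\setminus(A\cup B)=G$ is counted by $\omega_{f}$ exactly when $\sum_{v\in V(G)}f(v)$ is odd; since $G$ is connected and this sum is even, $\omega_{f}(G,\emptyset,\emptyset)=0\le 0$, so Tutte's inequality holds there. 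It then remains to recover Tutte's inequality for $A\cup B\neq\emptyset$ from the new condition.

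The crux is a parity identity: for all disjoint $A,B$,
$$\omega_{f}(G,A,B)-\theta(A,B)\equiv \sum_{v\in V(G)}f(v)\pmod 2.$$
To prove it, let $D=V(G)\setminus(A\cup B)$ and let $C_1,\dots,C_t$ be the components of $G\setminus(A\cup B)$. By definition $\omega_{f}(G,A,B)$ counts those $C_i$ with $d_G(V(C_i),B)+\sum_{v\in V(C_i)}f(v)$ odd, so summing these parities over all components gives
$$\omega_{f}(G,A,B)\equiv\sum_{i=1}^{t}\Big(d_G(V(C_i),B)+\sum_{v\in V(C_i)}f(v)\Big)\equiv d_G(D,B)+\sum_{v\in D}f(v)\pmod 2,$$
using that each $D$--$B$ edge lies in exactly one component. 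A routine degree count gives $\sum_{v\in B}d_{G\setminus A}(v)\equiv d_G(B,D)\pmod 2$, since edges inside $B$ contribute an even amount and $A$--$B$ edges are absent in $G\setminus A$. Substituting into $\theta(A,B)$, working modulo $2$ where signs are irrelevant, and using that $A,B,D$ partition $V(G)$, the terms $d_G(D,B)$ and $d_G(B,D)$ cancel and the $f$-values over $A,B,D$ combine to $\sum_{v\in V(G)}f(v)$, yielding the claimed congruence.

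With the identity in hand the converse is immediate. As $\sum_{v\in V(G)}f(v)$ is even, $\omega_{f}(G,A,B)-\theta(A,B)$ is always even; so if the new condition gives $\omega_{f}(G,A,B)\le\theta(A,B)+1$ for some $A\cup B\neq\emptyset$, evenness forces $\omega_{f}(G,A,B)\le\theta(A,B)$, which is Tutte's inequality. Combined with the case $A=B=\emptyset$, all of Tutte's inequalities hold, and Theorem~\ref{intro:thm:Tutte} provides the $f$-factor. The only genuinely delicate step is the parity identity; there the main care is bookkeeping with loops and multiple edges, which are permitted here, and keeping every count modulo $2$ so that loops and within-part edges contribute even amounts and vanish.
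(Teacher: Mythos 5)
Your proof is correct. The paper gives no proof of this lemma---it is stated as a cited reformulation of Theorem~\ref{intro:thm:Tutte}---and your derivation (reduce to Tutte's criterion, dispose of the case $A=B=\emptyset$ using connectivity and the evenness of $\sum_{v\in V(G)}f(v)$, and upgrade the slack of $1$ to $0$ via the standard parity identity $\omega_f(G,A,B)-\theta(A,B)\equiv\sum_{v\in V(G)}f(v)\pmod 2$) is exactly the intended argument, with the parity identity verified correctly including the bookkeeping for loops and multiple edges.
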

Now, we are ready to prove the main result of this subsection.
\begin{thm}\label{thm:Eulerian}
{Let $G$ be an Eulerian graph and let $i$ be an integer-valued function on $V(G)$ with
 $|E(G)| \stackrel{2}{\equiv} t$, where $t=\sum_{v\in V(G)}|i(v)|$.
If $G$ is $(2t-1)$-edge-connected and $bi(G)\ge t-1$, then $G$ has a factor $F$ 
such that for each vertex $v$, $$d_F(v)=d_G(v)/2+i(v).$$
}\end{thm}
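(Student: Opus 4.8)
The plan is to reduce the statement to the existence of an ordinary $f$-factor and then verify Tutte's criterion in the reformulation given by Lemma~\ref{lem:Tutte}. I would set $f(v)=\tfrac12 d_G(v)+i(v)$ for every vertex $v$; this is integer-valued precisely because $G$ is Eulerian and hence every degree is even, and a factor $F$ with $d_F(v)=d_G(v)/2+i(v)$ is exactly an $f$-factor. First I would check the parity hypothesis of Lemma~\ref{lem:Tutte}: since $i(v)\equiv |i(v)| \pmod 2$ we get $\sum_{v}f(v)=|E(G)|+\sum_v i(v)\equiv t+t\equiv 0\pmod 2$, using $|E(G)|\equiv t$. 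Connectedness of $G$ is part of being Eulerian (and when $t\ge 1$ it also follows from $(2t-1)$-edge-connectivity), so Lemma~\ref{lem:Tutte} applies.

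The core of the argument is a lower bound on the deficiency. For disjoint $A,B$ with $A\cup B\neq\emptyset$, writing $C=V(G)\setminus(A\cup B)$ and using $\sum_{v\in A}d_G(v)=2e_G(A)+d_G(A)$ together with $d_{G\setminus A}(v)=d_G(v)-d_G(v,A)$, the degree terms should collapse after substituting $f$. Concretely, I expect
\begin{equation*}
\sum_{v\in A} f(v) + \sum_{v\in B}\bigl(d_{G\setminus A}(v)-f(v)\bigr)
= e_G(A)+e_G(B)+\tfrac12 d_G(C)+\sum_{v\in A}i(v)-\sum_{v\in B}i(v),
\end{equation*}
where the half-boundary contributions of $A$ and $B$ combine into $\tfrac12 d_G(C)=\tfrac12\sum_{D}d_G(D)$, the sum running over the components $D$ of $G\setminus(A\cup B)$. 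The goal then becomes showing that
\begin{equation*}
\Delta:=e_G(A)+e_G(B)+\Bigl(\tfrac12\textstyle\sum_{D}d_G(D)-\omega_f(G,A,B)\Bigr)+\sum_{v\in A}i(v)-\sum_{v\in B}i(v)\ \ge\ -1,
\end{equation*}
since this is equivalent to the strict inequality of Lemma~\ref{lem:Tutte}. The only negative contribution is the $i$-term, which is bounded below by $-\sum_{v\in V(G)}|i(v)|=-t$.

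It remains to absorb this deficit of $t$, and here I would split into two cases according to whether $A\cup B=V(G)$. If $A\cup B\neq V(G)$, there is at least one component $D$; since $G$ is Eulerian, $d_G(D)=\sum_{v\in D}d_G(v)-2e_G(D)$ is even, and $(2t-1)$-edge-connectivity forces $d_G(D)\ge 2t-1$, hence $d_G(D)\ge 2t$ and $\tfrac12 d_G(D)\ge t$. Using $\omega_f(G,A,B)\le\omega(G\setminus(A\cup B))$ and bounding each component's contribution by $\tfrac12 d_G(D)-1\ge t-1\ge 0$ gives $\tfrac12\sum_D d_G(D)-\omega_f\ge t-1$, whence $\Delta\ge (t-1)-t=-1$. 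If instead $A\cup B=V(G)$, then $(A,B)$ is a bipartition, there are no components, and the dichotomy switches to the other hypothesis: by definition $bi(G)=\min_{(X,Y)}\bigl(e_G(X)+e_G(Y)\bigr)$, so $bi(G)\ge t-1$ yields $e_G(A)+e_G(B)\ge t-1$ and again $\Delta\ge(t-1)-t=-1$. The main obstacle I anticipate is the bookkeeping in the deficiency simplification and, more conceptually, recognizing that the two structural alternatives (a surviving component versus a full bipartition) are governed respectively by the edge-connectivity and the bipartite-index hypotheses; the parity upgrade $d_G(D)\ge 2t-1\Rightarrow d_G(D)\ge 2t$ from the Eulerian condition is what makes the first case tight.
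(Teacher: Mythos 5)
Your proof is correct and follows essentially the same route as the paper's: both reduce to Lemma~\ref{lem:Tutte} with $f=d_G/2+i$, bound the $i$-term by $-t$, and split into the two cases $A\cup B\neq V(G)$ (handled by the edge-connectivity hypothesis applied to the surviving components) and $A\cup B=V(G)$ (handled by $bi(G)\ge t-1$). The only cosmetic difference is that you invoke the Eulerian parity upgrade $d_G(D)\ge 2t$, whereas the paper gets by with $d_G(D)\ge 2t-1$ since a slack of $3/2<2$ already suffices for the strict inequality.
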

\begin{proof}
{For each vertex $v$, define $f(v)=d_G(v)/2+i(v)$.
By the assumption, $\sum_{v\in V(G)}f(v)$ must be even.
Let $A$ and $B$ be two disjoint subsets of $V(G)$ with $A\cup B\neq \emptyset$.
 If  $A\cup B\neq V(G)$, then since $G$ is $(2t-1)$-edge-connected, it is not hard to check that
$$\omega(G\setminus (A\cup B))\le \sum_{v\in A\cup B}d_G(v)/2-d_G(A,B)-(2t-1)/2+1.$$
 If $A\cup B=V(G)$, then since $bi(G)\ge t-1$, we must have $e_G(A)+e_G(B)\ge t-1$ which implies that
$$\omega(G\setminus (A\cup B))
=0= 
 \sum_{v\in V(G)}d_G(v)/2-|E(G)|\le \sum_{v\in A\cup B}d_G(v)/2-d_G(A,B)-(t-1).$$
Since $t\ge \sum_{v\in A\cup B}|i(v)|\ge \sum_{v\in A}i(v)+\sum_{v\in B}-i(v)$, in both cases, we therefore have
$$\omega(G\setminus (A\cup B)) \le \sum_{v\in A}(d_G(v)/2+i(v))+\sum_{v\in B}(d_G(v)/2-i(v))-d_G(A,B)+3/2,$$
which implies that
$$\omega(G\setminus (A\cup B))
< 2+ \sum_{v\in A}f(v)+\sum_{v\in B}(d_G(v)-f(v))-d_G(A,B).$$
Hence the assertion follows from Lemma~\ref{lem:Tutte}.
}\end{proof}
The following special case plays an important role in this paper.
This result was formerly  proved in \cite[Theorem 4.11]{ModuloFactorBounded} for $6|t|$-tree-connected graphs.
\begin{cor}\label{cor:Eulerian}
{Let $G$ be an Eulerian with $z\in V(G)$ and let $t$ be an integer number with $t\stackrel{2}{\equiv}\sum_{v\in V(G)}|E(G)|$
If $G$ is $2|t|$-tree-connected and $bi(G)\ge |t|-1$, then $G$ has a factor $F$ 
such that $d_F(z)=d_G(z)/2+t$ and $d_F(v)=d_G(v)/2$ for each $v\in V(G)\setminus \{z\}$.
}\end{cor}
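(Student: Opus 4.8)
The plan is to derive Corollary~\ref{cor:Eulerian} as a direct specialization of Theorem~\ref{thm:Eulerian} by choosing the function $i$ appropriately. Theorem~\ref{thm:Eulerian} produces a factor $F$ with $d_F(v)=d_G(v)/2+i(v)$ for every vertex, under the hypotheses that $G$ is Eulerian, that $G$ is $(2t'-1)$-edge-connected and $bi(G)\ge t'-1$ where $t'=\sum_{v\in V(G)}|i(v)|$, and that $|E(G)|\stackrel{2}{\equiv}t'$. So the natural choice is to concentrate all the degree modification at the single vertex $z$: set $i(z)=t$ and $i(v)=0$ for every $v\in V(G)\setminus\{z\}$. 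With this choice the target degrees become exactly $d_F(z)=d_G(z)/2+t$ and $d_F(v)=d_G(v)/2$ for $v\neq z$, matching the conclusion of the corollary.

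Next I would verify that the hypotheses of Theorem~\ref{thm:Eulerian} are met under this choice. With $i$ as above, the quantity $t'=\sum_{v\in V(G)}|i(v)|=|i(z)|=|t|$. The parity condition of the theorem requires $|E(G)|\stackrel{2}{\equiv}t'=|t|$, and since $|t|\equiv t\pmod 2$, this is exactly the hypothesis $t\stackrel{2}{\equiv}|E(G)|$ assumed in the corollary (reading the slightly garbled statement $t\stackrel{2}{\equiv}\sum_{v\in V(G)}|E(G)|$ as $t\stackrel{2}{\equiv}|E(G)|$). The bipartite-index condition of the theorem reads $bi(G)\ge t'-1=|t|-1$, which is precisely what the corollary assumes.

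The one point requiring a short argument is the edge-connectivity hypothesis. Theorem~\ref{thm:Eulerian} demands $(2t'-1)$-edge-connectivity, i.e. $(2|t|-1)$-edge-connectivity, whereas the corollary only assumes $G$ is $2|t|$-tree-connected. Here I would invoke the standard fact that an $m$-tree-connected graph is $2m$-edge-connected (each spanning tree meets every edge cut, so $m$ edge-disjoint spanning trees force every cut to have at least... more carefully, $m$-tree-connectedness implies $m$-edge-connectivity, but the sharper bound needed follows because the $2|t|$ spanning trees each contribute at least one edge across any cut). Thus $2|t|$-tree-connectivity yields at least $2|t|$-edge-connectivity, which certainly implies $(2|t|-1)$-edge-connectivity as required. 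I expect this connectivity comparison to be the only nonroutine step, and it is immediate from the Nash-Williams--Tutte framework already cited in the introduction; everything else is pure substitution.

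Having checked all three hypotheses, the conclusion of Theorem~\ref{thm:Eulerian} gives the desired factor $F$ directly, completing the proof. No genuine obstacle arises beyond bookkeeping: the corollary is the $i=t\cdot\mathbf{1}_{\{z\}}$ instance of the theorem, and the only substantive verification is that tree-connectivity upgrades the needed edge-connectivity bound.
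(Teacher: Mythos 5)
Your proof is correct and is exactly the specialization the paper intends: the corollary is stated without proof as the instance of Theorem~\ref{thm:Eulerian} with $i(z)=t$ and $i(v)=0$ elsewhere, and your verification that $2|t|$-tree-connectivity gives $(2|t|-1)$-edge-connectivity (via each of the $2|t|$ edge-disjoint spanning trees crossing every cut) together with the parity observation $|t|\stackrel{2}{\equiv}t$ is all that is needed.
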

\subsection{Eulerian tree-connected factors with bounded bipartite index}
Before applying Corollary~\ref{cor:Eulerian}, we need to utilize the following simple tool in order to establish a desired spanning Eulerian subgraph.
\begin{lem}\label{lem:decomposition:Eulerian}
{Let $G$ be a graph. If $G[X,Y]$ is $(m_1+m_2+1)$-tree-connected graph for a bipartition $X,Y$ of $V(G)$, 
then $G$ can be decomposed into two factors $G_1$ and $G_2$ 
such that $G_1$ is a $m_1$-tree-connected bipartite graph with bipartition $(X,Y)$, and $G_2$ is an $m_2$-tree-connected Eulerian graph.
}\end{lem}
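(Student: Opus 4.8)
The plan is to read off the hypothesis directly: writing $B=G[X,Y]$ for the bipartite part of $G$, the assumption that $B$ is $(m_1+m_2+1)$-tree-connected means $B$ contains $m_1+m_2+1$ edge-disjoint spanning trees $F_1,\dots,F_{m_1+m_2+1}$, and these span all of $V(G)$ since $(X,Y)$ is a bipartition of $V(G)$. Every edge of $G$ that is \emph{not} between $X$ and $Y$ (edges with both ends in $X$, both ends in $Y$, or loops) is forced into $G_2$, because $G_1$ must be bipartite with bipartition $(X,Y)$; call this forced set $R$. Thus the only freedom is in how the edges of $B$ are split, and the whole task reduces to distributing the trees $F_i$ (and the leftover edges of $B$) correctly.

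First I would make a tentative assignment that secures both tree-connectivities while holding one spanning tree in reserve for a parity fix. Put the $m_1$ trees $F_2\cup\cdots\cup F_{m_1+1}$ into $G_1$, and put everything else into $G_2$: namely $R$, the $m_2$ trees $F_{m_1+2},\dots,F_{m_1+m_2+1}$, any leftover edges of $B$, and the reserve tree $F_1$. At this stage $G_1$ is bipartite and $m_1$-tree-connected, and $G_2$ contains $m_2$ spanning trees and is $m_2$-tree-connected. These two properties are robust: the only modification below moves some edges of the reserve tree $F_1$ from $G_2$ into $G_1$, which can only help $G_1$ and does not touch the $m_2$ trees certifying $G_2$.

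The one property that may fail is that $G_2$ need not yet be Eulerian, i.e.\ its degrees need not all be even, and this is the crux. Since $d_{G_2}(v)=d_G(v)-d_{G_1}(v)$, evenness of $G_2$ is equivalent to $d_{G_1}(v)\equiv d_G(v)\pmod2$ for every $v$; let $S$ be the set of vertices where this currently fails. Because $\sum_v d_G(v)$ and $\sum_v d_{G_1}(v)=2|E(G_1)|$ are both even, $|S|$ is even, so in the reserve tree $F_1$ there is a (unique) edge set $J\subseteq E(F_1)$ whose set of odd-degree vertices is exactly $S$ — the $S$-join of $F_1$, obtained after rooting $F_1$ by taking those edges below which an odd number of vertices of $S$ lie. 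Moving $J$ from $G_2$ into $G_1$ flips the degree parity of $G_1$ precisely on $S$, so the corrected $G_1=F_2\cup\cdots\cup F_{m_1+1}\cup J$ has $d_{G_1}(v)\equiv d_G(v)\pmod2$ everywhere, and hence $G_2=R\cup(F_1\setminus J)\cup F_{m_1+2}\cup\cdots\cup F_{m_1+m_2+1}\cup(\text{leftover})$ has all degrees even.

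Finally I would verify the two required conclusions: $G_1$ still contains $F_2,\dots,F_{m_1+1}$ and gained only edges of $B$, so it is $m_1$-tree-connected and bipartite with bipartition $(X,Y)$; and $G_2$ still contains $F_{m_1+2},\dots,F_{m_1+m_2+1}$, so it is $m_2$-tree-connected, whence connected when $m_2\ge1$, and with all even degrees it is Eulerian. The only genuine obstacle is the parity correction, and the point to stress is that it is exactly what the extra ``$+1$'' spanning tree in the hypothesis provides: with only $m_1+m_2$ trees there would be no slack tree $F_1$ in which to realize the $S$-join, and one could not make $G_2$ Eulerian without risking the tree-connectivity of $G_1$ or $G_2$.
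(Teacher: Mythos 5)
Your proof is correct and follows essentially the same route as the paper's: reserve one of the $m_1+m_2+1$ edge-disjoint spanning trees of $G[X,Y]$ and use the subforest of it with prescribed odd-degree set (your $S$-join $J$; the paper's forest $F\subseteq T$ with $d_F(v)\equiv d_{H_2}(v)\pmod 2$) to repair the degree parities of $G_2$, while the remaining $m_1+m_2$ trees certify the two tree-connectivities. The only differences are bookkeeping --- you move the parity-fixing subforest into $G_1$ rather than keeping it in $G_2$, and you are more explicit than the paper about routing the non-bipartite and leftover edges into $G_2$ --- so there is nothing to change.
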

\begin{proof}
{Decompose $G[X,Y]$ into three factors $T$, $H_1$, and $H_2$ such that $T$ is a tree, $H_1$ is $m_1$-tree-connected, and $H_2$ is $m_2$-tree-connected. Let $F$ be a spanning forest of $T$ such that for each vertex $v$, $d_F(v)$ and $d_{H_2}(v)$ have the same parity. Now, it is enough to define $G_2=H_2\cup F$ and $G_1=G\setminus E(G_2).$
}\end{proof}
We will apply the following lemma in Section~\ref{sec:tree-connected-factors} instead of the above-mentioned lemma.
\begin{lem}{\rm (\cite{complementary})}\label{lem:decomposition:bipartite-index}
{If $G$ is a $(2m_1+2m_2)$-tree-connected graph and $m_2\ge k_0\ge 0$, then $G$ can be decomposed into two factors $G_1$ and $G_2$ such that $G_1$ is a $2m_1$-edge-connected Eulerian graph, $G_2[X,Y]$ is $m_2$-tree-connected for a bipartition $X,Y$ of $V(G)$, 
$$e_{G_2}(X)+e_{G_2}(Y)\ge \min\{k_0, bi(G)\}.$$ 
}\end{lem}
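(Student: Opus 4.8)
The plan is to take $(X,Y)$ to be a bipartition realizing the bipartite index, that is, a maximum cut with $e_G(X)+e_G(Y)=bi(G)$, and then to exhibit inside $G$ three edge-disjoint packages that together force the conclusion: $m_2$ spanning trees consisting only of $(X,Y)$-crossing edges (these will lie in $G_2$ and make $G_2[X,Y]$ be $m_2$-tree-connected), a reserve of $\min\{k_0,bi(G)\}$ monochromatic edges (these will also lie in $G_2$ and account for $e_{G_2}(X)+e_{G_2}(Y)$), and $2m_1$ ordinary spanning trees out of which $G_1$ will be assembled. The key structural input is that for a maximum cut $G[X,Y]$ is $(m_1+m_2)$-tree-connected. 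I would prove this from the Nash--Williams partition criterion by a flipping argument: for a fixed partition $P$ of $V(G)$ into $r$ parts, flipping the side of each part independently changes the crossing status of a $P$-cross edge according to the product of the two flip signs, so a uniformly random flip makes each $P$-cross edge cross $(X,Y)$ with probability exactly $\tfrac12$; since $(X,Y)$ already maximizes the cut it attains at least this average, giving $e_{G[X,Y]}(P)\ge \tfrac12 e_G(P)\ge (m_1+m_2)(r-1)$.

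With the two Nash--Williams inequalities $e_G(P)\ge (2m_1+2m_2)(r-1)$ and $e_{G[X,Y]}(P)\ge (m_1+m_2)(r-1)$ in hand, I would obtain the three packages from the matroid union theorem applied to $m_2$ copies of the graphic matroid of the crossing-edge subgraph, $2m_1$ copies of the graphic matroid of $G$, and one free matroid of rank $\min\{k_0,bi(G)\}$ on the monochromatic edges. Writing $c(S)$ and $c_{\mathrm{cross}}(S)$ for the numbers of components spanned by an edge set $S$ using all edges and using only crossing edges respectively, and $r_J(S)$ for the number of reserved monochromatic edges in $S$, the existence of disjoint bases reduces to checking, for every $S\subseteq E(G)$, that $|E(G)\setminus S|\ge m_2(c_{\mathrm{cross}}(S)-1)+2m_1(c(S)-1)+(\min\{k_0,bi(G)\}-r_J(S))$.

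I expect this verification to be the main obstacle. I would bound $|E(G)\setminus S|$ below in two disjoint ways and take the better one: on the one hand by the crossing edges leaving the $c_{\mathrm{cross}}(S)$ crossing-components, which are at least $(m_1+m_2)(c_{\mathrm{cross}}(S)-1)$ by the cross inequality, together with the monochromatic edges lying outside $S$; on the other hand by all edges leaving the $c(S)$ full-components, which are at least $(2m_1+2m_2)(c(S)-1)$ by the full inequality. A short case analysis on whether $c_{\mathrm{cross}}(S)\ge 2c(S)-1$ then shows that one of the two bounds always meets the threshold: the first bound wins when the crossing-components are numerous, while in the remaining case the surplus of the full bound, namely $m_2\ge k_0$ beyond the tree-demand $2m_1+m_2$, absorbs the reserved term $\min\{k_0,bi(G)\}-r_J(S)$. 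Either way the matroid-union inequality holds, so the three edge-disjoint packages exist.

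Finally I would assemble $G_1$ from the $2m_1$ ordinary spanning trees by the standard pairing trick used in Lemma~\ref{lem:decomposition:Eulerian}: pairing the trees and, for each pair $(T,T')$, adjoining to $T$ the unique subforest of $T'$ whose degrees match the parities of $T$, which produces a spanning connected subgraph with all degrees even. The union of these $m_1$ spanning connected even subgraphs is Eulerian and, since each contributes at least $2$ to every edge cut, is $2m_1$-edge-connected. Setting $G_2:=G\setminus E(G_1)$, the factor $G_2$ then contains the $m_2$ crossing spanning trees and all the reserved monochromatic edges, so $G_2[X,Y]$ is $m_2$-tree-connected and $e_{G_2}(X)+e_{G_2}(Y)\ge \min\{k_0,bi(G)\}$, which completes the decomposition.
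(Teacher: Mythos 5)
The paper does not prove this lemma at all: it is imported verbatim from \cite{complementary}, so there is no in-text proof to compare against. Judged on its own, your argument is correct and self-contained. The three ingredients all check out: (i) the random-flipping argument correctly shows that for a maximum cut $(X,Y)$ every partition $P$ into $r$ parts has at least $\tfrac12 e_G(P)\ge (m_1+m_2)(r-1)$ crossing edges between its parts, so $G[X,Y]$ is $(m_1+m_2)$-tree-connected by Nash--Williams; (ii) the matroid base-packing verification goes through, since when $c_{\mathrm{cross}}(S)-1\ge 2(c(S)-1)$ the crossing-edge bound $(m_1+m_2)(c_{\mathrm{cross}}(S)-1)$ plus the $|M|-r_J(S)$ leftover monochromatic edges suffices, and otherwise the full bound $(2m_1+2m_2)(c(S)-1)$ has surplus at least $m_2\ge k_0\ge \min\{k_0,bi(G)\}$ to absorb the reserve term (note the deficiency of the uniform matroid is really $\max\{0,\min\{k_0,bi(G)\}-r_J(S)\}$ rather than the signed quantity you display, but both of your bounds in fact dominate the max, so nothing breaks); and (iii) the pairing trick turns the $2m_1$ ordinary spanning trees into $m_1$ edge-disjoint spanning connected even subgraphs whose union is a $2m_1$-edge-connected Eulerian factor, leaving the $m_2$ crossing trees and the reserved monochromatic edges inside $G_2$. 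Compared with the elementary decomposition arguments the paper does exhibit (e.g.\ Lemma~\ref{lem:decomposition:Eulerian}, which just splits a tree-connected bipartite factor and fixes parities with a forest), your route is heavier machinery --- matroid union with three families of matroids --- but it buys a genuinely simultaneous packing of the crossing trees, the ordinary trees, and the monochromatic reserve, which is exactly what is needed here and which a naive sequential decomposition of $G[X,Y]$ alone cannot deliver when $m_1>m_2$.
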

The following simple lemma can also help use to give a bound on bipartite index.
\begin{lem}{\rm (see \cite{ModuloFactorBounded})}\label{lem:bound:bipartite-index}
{Let $G$ be a graph. If for a bipartition $X,Y$ of $V(G)$, 
the graph $G[X,Y]$ is $k$-tree-connected and $e_G(X)+e_G(Y)\ge k$,
 then $bi(G)\ge k$. 
}\end{lem}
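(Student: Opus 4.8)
The plan is to unwind the definition of $bi(G)$ into a minimum over cuts and then reduce the statement to a claim about a single arbitrary cut. A bipartite spanning induced subgraph $H$ of $G$ corresponds to a bipartition $(X',Y')$ of $V(G)$ via $H=G[X',Y']$, and then $|E(G)\setminus E(H)|=e_G(X')+e_G(Y')$, the number of edges lying inside the two parts. Thus $bi(G)=\min_{(X',Y')}\big(e_G(X')+e_G(Y')\big)$, and it suffices to prove that $e_G(X')+e_G(Y')\ge k$ for every bipartition $(X',Y')$ of $V(G)$. I would fix such a bipartition and write $B:=G[X,Y]$ for the bipartite factor of the hypothesis, which is $k$-tree-connected and spanning.

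First I would split $V(G)$ into the four cells $X\cap X'$, $X\cap Y'$, $Y\cap X'$, $Y\cap Y'$. Every edge of $B$ joins $X$ to $Y$, and such an edge fails to cross the cut $(X',Y')$ precisely when it joins $X\cap X'$ to $Y\cap X'$ (so that it lies inside $X'$) or joins $X\cap Y'$ to $Y\cap Y'$ (so that it lies inside $Y'$). All of these non-crossing edges of $B$ contribute to $e_G(X')+e_G(Y')$, so that quantity is at least the number of edges of $B$ lying inside $X'$ or inside $Y'$. The crucial observation is that this set of edges of $B$ is exactly the set of edges of $B$ that cross the auxiliary bipartition $\{(X\cap X')\cup(Y\cap Y'),\ (X\cap Y')\cup(Y\cap X')\}$: a bipartite edge of $B$ crosses this diagonal bipartition if and only if it is of one of the two non-crossing types just described.

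Now I would invoke tree-connectedness. Provided both sides of the auxiliary bipartition are nonempty, each of the $k$ edge-disjoint spanning trees of $B$ must use at least one edge crossing it (a connected spanning subgraph cannot avoid a nonempty-nonempty cut), and these crossing edges are distinct; hence $B$ has at least $k$ edges crossing the auxiliary cut, giving $e_G(X')+e_G(Y')\ge k$. The only remaining point is the degenerate case in which one side of the auxiliary bipartition is empty, and a short check shows this forces $(X',Y')$ to equal $(X,Y)$ up to swapping the two parts, where the hypothesis $e_G(X)+e_G(Y)\ge k$ delivers the bound directly. I expect the main (and essentially the only) subtlety to be identifying the correct auxiliary bipartition — the one pairing $X\cap X'$ with $Y\cap Y'$ against $X\cap Y'$ with $Y\cap X'$ — so that ``non-crossing for $(X',Y')$'' is translated into ``crossing for the auxiliary cut,'' which lets the $k$ edge-disjoint spanning trees of $B$ do the counting; once that translation is set up, both the degenerate cases and the final inequality are routine.
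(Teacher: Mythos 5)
Your proof is correct, but it takes a genuinely different route from the paper's. The paper packs $k$ edge-disjoint odd cycles: it takes $k$ edge-disjoint spanning trees $T_1,\dots,T_k$ of $G[X,Y]$ and $k$ distinct intra-part edges $e_1,\dots,e_k$ (available by the hypothesis $e_G(X)+e_G(Y)\ge k$); each $T_i+e_i$ contains an odd cycle because $T_i$ is bipartite with bipartition $(X,Y)$ and $e_i$ joins two vertices on the same side, and since every bipartite subgraph must omit at least one edge from each of these $k$ edge-disjoint odd cycles, $bi(G)\ge k$ follows in two lines. You instead unwind $bi(G)$ as a minimum of $e_G(X')+e_G(Y')$ over all bipartitions $(X',Y')$ and bound each such cut directly, via the observation that the edges of $G[X,Y]$ lying inside $X'$ or inside $Y'$ are exactly the edges of $G[X,Y]$ crossing the diagonal bipartition $\bigl((X\cap X')\cup(Y\cap Y'),\,(X\cap Y')\cup(Y\cap X')\bigr)$; tree-connectedness then forces at least $k$ such crossing edges when both diagonal classes are nonempty, and the degenerate case reduces to $(X',Y')=(X,Y)$ where the hypothesis applies directly. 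Your argument is sound (the four-cell translation and the degenerate case both check out), and it has the merit of making transparent exactly where each hypothesis is used -- tree-connectedness handles all nontrivial cuts, while $e_G(X)+e_G(Y)\ge k$ is needed only for the one cut that the trees cannot see. The paper's argument is shorter and produces an explicit certificate (an odd-cycle packing) that lower-bounds the bipartite index, at the cost of relying on the standard fact that edge-disjoint odd cycles force edge deletions.
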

\begin{proof}
{Let $T_1,\ldots, T_k$ be $k$ edge-disjoint spanning trees of $G[X,Y]$ and let $e_1,\ldots, e_k$ be $k$ distinct edges of the graph $G[X]\cup G[Y]$. Since $T_i$ is a bipartite graph with the bipartition $(X,Y)$, the graph $T_i+e_i$ must contain an odd cycle $C_i$.
Therefore, $G$ contains $k$ edge-disjoint odd cycles and so $bi(G)\ge k$.
}\end{proof}
\section{The existence of $\{g,f\}$-factors}
\label{sec:factors}
In this section, we shall provide some sufficient condition conditions for the existence of $\{g,f\}$-factors in highly edge-connected graphs.
\subsection{Tools: orientations}
We shall below recall some recent results about the existence of orientations with constrained out-degrees in highly edge-connected graphs. For applying them in our proofs, we need the following lemma to make factors with constrained degrees from such orientations.
\begin{lem}{\rm (see \cite{p-p+1-q-1-q})}\label{lem:factor-orientation}
{Let $G$ be a bipartite graph with bipartition $(X,Y)$ and $L:V(G)\rightarrow 2^\mathbb{Z}$ be a function.
Then $G$ admits an $L$-orientation if and only if $G$ admits an $L_0$-factor, where for each vertex $v$,
$$L_0(v)=
 \begin{cases}
L(v),	&\text{when $v\in X$};\\
\{d_G(v)-i:i\in L(v)\},	&\text{when $v\in Y$}. 
\end {cases}$$
}\end{lem}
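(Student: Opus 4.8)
The plan is to set up an explicit bijection between the orientations of $G$ and the factors (spanning subgraphs) of $G$, and then to read off the degree constraints on both sides. Since $G$ is bipartite with parts $X$ and $Y$, every edge joins a vertex of $X$ to a vertex of $Y$, so each edge carries a canonical reference direction, namely from its $X$-end to its $Y$-end. Given a factor $F\subseteq E(G)$, I would orient each edge $e=\{x,y\}$ with $x\in X$ and $y\in Y$ from $x$ to $y$ precisely when $e\in F$, and from $y$ to $x$ otherwise. As membership in $F$ is an independent binary choice on each edge, this correspondence is a bijection between the subsets of $E(G)$ and the orientations of $G$.

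First I would compute the out-degrees produced by this rule. For a vertex $x\in X$, the edges leaving $x$ are exactly the edges of $F$ incident with $x$, so $d^+_G(x)=d_F(x)$. For a vertex $y\in Y$, an edge leaves $y$ exactly when it does \emph{not} belong to $F$, so $d^+_G(y)=d_G(y)-d_F(y)$.

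Next I would translate the constraints through these two identities. For $x\in X$ the condition $d^+_G(x)\in L(x)$ is, by the first identity, literally the condition $d_F(x)\in L(x)=L_0(x)$. For $y\in Y$, the condition $d^+_G(y)\in L(y)$ reads $d_G(y)-d_F(y)\in L(y)$, that is $d_F(y)\in\{d_G(y)-i:i\in L(y)\}=L_0(y)$, by the second identity. Hence, under the bijection, an orientation is an $L$-orientation if and only if the corresponding factor is an $L_0$-factor, which yields both implications of the statement at once.

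I do not expect a genuine obstacle here: the whole content is the observation that bipartiteness supplies a canonical per-edge reference direction, so that choosing an orientation amounts to choosing which edges to place into a factor. The only point requiring care is the degree bookkeeping on the $Y$-side, where out-degree corresponds to the complement of the factor-degree; this is exactly what the reversal $i\mapsto d_G(v)-i$ in the definition of $L_0$ is designed to compensate for.
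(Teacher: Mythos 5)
Your proof is correct and follows essentially the same route as the paper: both establish the bijection between factors and orientations by declaring an edge to lie in $F$ exactly when it is directed from $X$ to $Y$, and then read off $d^+(x)=d_F(x)$ for $x\in X$ and $d^+(y)=d_G(y)-d_F(y)$ for $y\in Y$. No issues.
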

\begin{proof}
{If $D$ is an orientation of $G$, then the factor $F$ consisting of all edges of $G$ directed from $X$ to $Y$
satisfies $d_F(v)=d^+_D(v)$ for each $v\in X$, and $d_F(v)=d_G(v)-d^+_D(v)$ for each $v\in Y$.
Conversely, from every factor $F$, we can make an orientation $D$ whose edges directed from $X$ to $Y$ 
are exactly the same edges of $F$.
}\end{proof}
\begin{thm}{\rm (\cite{p-q})}\label{thm:exact-p,q}
{Let $G$ be a $4k^2$-tree-connected graph and let $\p$ and $\q$ be two integer-valued functions on $V(G)$ in which for each vertex $v$, $\p (v)\le d_G(v)/2\le\q(v)$ and $|\q(v)-\p(v)|\le k$.
Then $G$ has an orientation such that for each $v\in V(G)$,
$$d^+_G(v)\in \{\p(v),\q(v)\},$$
if and only if there is an integer-valued function $t$ on $V(G)$ in which $\t(v)\in \{\p(v),\q(v)\}$ for each $v\in V(G)$, and 
$|E(G)|=\sum_{v\in V(G)}\t(v)$.
Furthermore, for an arbitrary given vertex $z$, we can have $d^+_G(z)=\t(z)$.
}\end{thm}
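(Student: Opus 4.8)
Necessity is immediate: if $D$ is an orientation of $G$ with $d^+_D(v)\in\{p(v),q(v)\}$ for every $v$, set $t(v)=d^+_D(v)$. Then $t(v)\in\{p(v),q(v)\}$, and since every edge contributes exactly one unit to the total out-degree, $\sum_{v\in V(G)}t(v)=|E(G)|$. So only sufficiency requires work, and the point of the statement is that the given $t$ is used merely to pin the value at the single vertex $z$: at every other vertex we are free to aim at either $p(v)$ or $q(v)$.

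I would first reformulate sufficiency as an \emph{exact} out-degree problem with a choice. For a selection $x:V(G)\to\{0,1\}$ put $\tau_x(v)=p(v)+x(v)\bigl(q(v)-p(v)\bigr)$, so that $\tau_x(v)\in\{p(v),q(v)\}$; an orientation realizing $d^+_D(v)=\tau_x(v)$ for all $v$ is exactly what we want. By the classical orientation theorem of Hakimi, such an orientation exists if and only if
\[
\sum_{v\in V(G)}\tau_x(v)=|E(G)|
\qquad\text{and}\qquad
\sum_{v\in X}\tau_x(v)\ge e_G(X)\ \text{ for every } X\subseteq V(G).
\]
The task thus becomes: choose $x$ with $\tau_x(z)=t(z)$ meeting the global equality together with all the cut inequalities. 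The equality is a single linear constraint, and the hypothesis supplies a feasible starting choice for it, namely the $x$ corresponding to the given $t$ (for which $\tau_x\equiv t$, $\sum_v\tau_x(v)=|E(G)|$), already compatible with the pin at $z$.

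The cut inequalities are where the tree-connectivity enters and where the difficulty lies. The crude estimate $\tau_x(v)\ge p(v)\ge \tfrac12 d_G(v)-k$ gives $\sum_{v\in X}\tau_x(v)-e_G(X)\ge \tfrac12 d_G(X)-k|X|$, which is nonnegative only when the boundary $d_G(X)$ is large relative to $|X|$; for sets with many vertices but a thin boundary it can fail, mirroring the fact that the rigid target $\tau_x=t$ need not itself be realizable. The remedy is to exploit the freedom in $x$: on any set $X$ threatening to violate its inequality one raises $\sum_{v\in X}\tau_x(v)$ by switching vertices of $X$ to their upper value $q(v)\ge\tfrac12 d_G(v)$, which over-satisfies that inequality with slack $\tfrac12 d_G(X)$, the global equality being restored by compensating elsewhere. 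Making all these local corrections simultaneously consistent is a feasibility (submodular-flow / matroid-intersection type) problem, and this is where the $4k^2$ edge-disjoint spanning trees do the work: they not only keep $d_G(X)$ large for every cut, but provide explicit edge-disjoint routes along which out-degree units can be transported between a surplus and a deficit vertex, since reversing a directed path shifts one unit of out-degree from its start to its end and leaves all other out-degrees fixed, and reorientation never depletes the underlying edge-connectivity. I expect the heart of the proof, and the origin of the quadratic bound $4k^2=(2k)^2$ — through which up to $k$ units of correction per vertex are absorbed with a $2k$-fold redundancy — to be precisely this simultaneous verification of the cut condition over all $X$, organised either as one global flow-feasibility argument or as an induction moving the vertices onto their endpoints one at a time, each step routing a single unit along a reserved spanning tree while the certificate $t$ keeps the total out-degree equal to $|E(G)|$ and the value at $z$ fixed.
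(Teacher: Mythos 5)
This theorem is not proved in the paper at all: it is imported verbatim from \cite{p-q} (arXiv:2205.09038) and used as a black box, so there is no in-paper argument to measure your attempt against. Judged on its own terms, your necessity direction is correct and complete, and your reduction of sufficiency to Hakimi's orientation criterion (global equality $\sum_v\tau_x(v)=|E(G)|$ plus the cut inequalities $\sum_{v\in X}\tau_x(v)\ge e_G(X)$) is a legitimate framing. But everything after that is a plan rather than a proof, and the plan stops exactly at the theorem's actual content.

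The genuine gap is the simultaneous feasibility step, which you yourself flag with ``I expect the heart of the proof \ldots to be precisely this.'' Concretely: to repair a cut inequality for a thin-boundary set $X$ you raise vertices of $X$ to $q$, which inflates $\sum_{v\in X}\tau_x(v)$ by up to $k|X|$ and forces an equal compensating decrease outside $X$; but $V(G)\setminus X$ may itself contain thin-boundary sets whose inequalities are destroyed by that decrease, and the pinned value at $z$ removes one more degree of freedom. You give no mechanism --- no potential function, no uncrossing/submodularity argument, no induction --- for resolving these conflicting demands, and no point at which the hypothesis $4k^2$ (as opposed to, say, $2k$ or $k^2$) is actually consumed. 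You also conflate two distinct strategies without completing either: the static Hakimi-feasibility route (choose $x$ once, verify all cuts) and the dynamic path-reversal route (start from a near-Eulerian orientation and shift out-degree units along reserved spanning trees). The second is closer to how results of this type are usually proved --- orient so that $d^+(v)$ is within $k$ of $d_G(v)/2$ for all $v$, then use the $4k^2$ edge-disjoint spanning trees to ship at most $k$ units of correction per vertex to its target in $\{p(v),q(v)\}$, the certificate $t$ guaranteeing that total surplus equals total deficit --- but carrying that out requires a careful bookkeeping of which trees have been used and why a reversal never pushes an already-finished vertex out of $\{p(v),q(v)\}$, none of which is present. As written, the proposal identifies the right obstacle but does not overcome it.
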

\begin{thm}{\rm (\cite{p-q})}\label{thm:z-defective-p,q}
{Let $G$ be a graph with $z\in V(G)$, let $k$ be a positive integer, and let $\p$ and $\q$ be two integer-valued functions on $V(G)$ in which for each vertex $v$, $\p(v)\le d_G(v)/2\le \q(v)$ and $|\q(v)-\p(v)|\le k$.
If $G$ is $(\frac{3}{2}k+1)(k-1)$-tree-connected, then it has an orientation such that for each $v\in V(G)\setminus \{z\}$,
$$d^+_G(v)\in \{\p(v),\q(v)\}.$$
Furthermore, for the vertex $z$, we can have 
$-x\le d^+_G(z)-d_G(z)/2<k-x$, where $x$ is an arbitrary real number $x\in [0, k)$.
}\end{thm}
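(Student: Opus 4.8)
The plan is to recast the conclusion as a prescribed-out-degree orientation problem and to solve it in two stages: first fix an admissible target out-degree function $t$ whose total equals $|E(G)|$, using the vertex $z$ to absorb the global discrepancy; then realise $t$ by an actual orientation, which is where the tree-connectivity is spent.

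First I would choose the targets. For each $v\ne z$ I want $t(v)\in\{p(v),q(v)\}$, and for $z$ I want $d^+_G(z)=t(z)$ to land in the half-open interval $J=[\,d_G(z)/2-x,\ d_G(z)/2+k-x\,)$ of length $k$. Since any orientation satisfies $\sum_{v}d^+_G(v)=|E(G)|$, it suffices to pick $t(v)\in\{p(v),q(v)\}$ for $v\ne z$ with $\sum_{v\ne z}t(v)\in(\,|E(G)|-d_G(z)/2-(k-x),\ |E(G)|-d_G(z)/2+x\,]$ and then set $t(z)=|E(G)|-\sum_{v\ne z}t(v)\in J$. To see such a choice exists, start from the all-lower assignment and flip the vertices $v\ne z$ from $p(v)$ to $q(v)$ one at a time. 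The partial sums increase monotonically from $\sum_{v\ne z}p(v)$ to $\sum_{v\ne z}q(v)$ in steps of size $q(v)-p(v)\le k$, and because $p(v)\le d_G(v)/2\le q(v)$ the value $|E(G)|-d_G(z)/2=\sum_{v\ne z}d_G(v)/2$ lies between these two extremes. A monotone integer sequence with steps at most $k$ that passes an interval of length $k$ must land inside it, so some partial sum lies in the required window; this fixes $t$ with $\sum_{v}t(v)=|E(G)|$ and $t(z)\in J$.

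Next I would realise $t$ by an orientation. By the classical theorem of Hakimi, $G$ has an orientation with $d^+_G(v)=t(v)$ for every $v$ if and only if $\sum_v t(v)=|E(G)|$ and $\sum_{v\in X}t(v)\ge e_G(X)$ for every $X\subseteq V(G)$. Writing $\epsilon(v)=t(v)-d_G(v)/2$ and using $\sum_{v\in X}d_G(v)=2e_G(X)+d_G(X)$, the cut condition becomes
\[
\tfrac12\,d_G(X)+\sum_{v\in X}\epsilon(v)\ \ge\ 0\qquad\text{for all }X\subseteq V(G),
\]
where $\epsilon(v)\le 0$ when $t(v)=p(v)$, $\epsilon(v)\ge 0$ when $t(v)=q(v)$, and $|\epsilon(v)|\le k$ throughout because $p(v)\le d_G(v)/2\le q(v)$ and $q(v)-p(v)\le k$.

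The main obstacle is exactly this cut condition: the particular $t$ produced by the flipping argument is only constrained in total, and on a set $X$ where many vertices are pushed down to $p(v)$ the sum $\sum_{v\in X}\epsilon(v)$ can be as negative as $-k|X|$, which a single-cut bound $d_G(X)\ge m$ cannot by itself absorb. The resolution is not to fix $t$ first but to choose the endpoints adaptively while building the orientation. Concretely, I would begin with a well-balanced, highly arc-connected orientation $D_0$ in the sense of Nash--Williams (available since the hypothesis forces $G$ to be highly edge-connected), so that $d^+_{D_0}(v)$ is within $1$ of $d_G(v)/2$ and directed paths between prescribed vertices are plentiful. Reversing a directed path changes the out-degrees only at its two ends, by $-1$ and $+1$, so I would transport out-degree units from vertices that must decrease to those that must increase, letting $z$ serve as the universal source or sink for the residual imbalance. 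The out-degree vectors reachable this way are precisely the integral points of the base polytope of the orientation polymatroid, so the task reduces to showing that this polytope meets the box $\prod_{v\ne z}\{p(v),q(v)\}\times J$. Tree-connectivity enters through Nash-Williams--Tutte by providing the slack $\tfrac12 d_G(X)$ in every cut, and the quantity $(\tfrac32 k+1)(k-1)$ is calibrated so that, after choosing on each tight set the nearer of the two admissible endpoints, this slack dominates the accumulated deviation. The step I expect to be genuinely hard is maintaining the cut inequality throughout the adaptive endpoint assignment: the tight sets form a crossing family, and one must uncross them and show that pushing a vertex to its far endpoint never violates two crossing tight constraints at once. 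This uncrossing bookkeeping, rather than the counting in the first two steps, is where the precise connectivity threshold is consumed and is the part I would expect to occupy the bulk of a complete proof.
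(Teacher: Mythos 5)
This statement is quoted from \cite{p-q} and is not proved anywhere in the present paper, so there is no in-house argument to compare against; your proposal therefore has to stand on its own, and as written it does not. The first stage (the monotone flipping argument producing a target function $t$ with $t(v)\in\{p(v),q(v)\}$ for $v\ne z$, $\sum_v t(v)=|E(G)|$, and $t(z)$ in the prescribed length-$k$ window) is correct, but you then correctly observe that this $t$ need not satisfy Hakimi's cut condition $\sum_{v\in X}t(v)\ge e_G(X)$ --- on a set $X$ whose vertices are all pushed down to $p(v)$ the deficit can be of order $k|X|$ while the cut only supplies $\tfrac12 d_G(X)$ --- and at that point the proof stops. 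Everything after that is a statement of intent: ``choose the endpoints adaptively,'' ``transport out-degree units by path reversals,'' ``show the base polytope meets the box,'' ``uncross the tight sets.'' You yourself flag the uncrossing and the maintenance of the cut inequality as the genuinely hard step and say it ``would occupy the bulk of a complete proof.'' That step is the theorem; without it nothing has been established.

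Concretely, the missing content is a feasibility argument showing that the orientation polymatroid's base polytope intersects $\prod_{v\ne z}\{p(v),q(v)\}\times J$, and this is where the hypothesis must actually be spent: the constant $\bigl(\tfrac32 k+1\bigr)(k-1)$ never enters any inequality in your write-up, and neither does tree-connectivity as opposed to mere edge-connectivity (the Nash-Williams--Tutte partition inequality $|E(G/\mathcal{P})|\ge m(|\mathcal{P}|-1)$ is the tool that controls a whole family of simultaneously deficient sets, which a single-cut bound $d_G(X)\ge m$ cannot). A plausible complete argument --- and the shape the quadratic threshold suggests --- is an induction on $k$ that peels off a bounded number of spanning trees at each level to repair the deficient sets, rather than a one-shot polytope intersection; but whichever route one takes, the quantitative heart of the proof is absent here. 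What you have is a correct reduction plus an accurate diagnosis of the difficulty, not a proof.
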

\subsection{Bipartite graphs}
The following theorem gives a necessary and sufficient condition for the existence of $\{g,f\}$-factors in highly edge-connected bipartite graphs provided that $f$ and $g$ are close enough.
\begin{thm}\label{thm:factor:necessary-sufficient:bipartite}
{Let $G$ be a $4k^2$-tree-connected bipartite graph with bipartition $(X,Y)$ and let $g$ and $f$ be two integer-valued functions on $V(G)$ in which for each $v\in V(G)$, $g(v)\le d_G(v)/2\le f(v)$ and $|f(v)-g(v)|\le k$.
Then $G$ has a factor $F$ such that for each vertex $v$,
$$d_F(v)\in \{g(v),f(v)\},$$
if and only if 
 there is an integer-valued function $h$ on $V(G)$ in which 
$h(v)\in \{g(v),f(v)\}$ for each $v\in V(G)$, and 
$\sum_{v\in X}h(v)=\sum_{v\in Y}h(v)$. 
Furthermore, for an arbitrary given vertex $z$, we can have $d_F(z)=h(z)$.
}\end{thm}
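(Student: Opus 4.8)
The plan is to reduce the bipartite factor problem to an orientation problem via Lemma~\ref{lem:factor-orientation}, and then invoke Theorem~\ref{thm:exact-p,q}. Since $G$ is bipartite with bipartition $(X,Y)$, a factor $F$ with $d_F(v)\in\{g(v),f(v)\}$ corresponds under Lemma~\ref{lem:factor-orientation} to an orientation $D$ with prescribed out-degree constraints: for $v\in X$ we require $d^+_D(v)\in\{g(v),f(v)\}$, and for $v\in Y$ we require $d^+_D(v)\in\{d_G(v)-f(v),d_G(v)-g(v)\}$. So I would define functions $p$ and $q$ on $V(G)$ by setting, on $X$, $p=\min\{g,f\}=g$ and $q=\max\{g,f\}=f$ (using $g\le f$), and on $Y$, $p(v)=d_G(v)-f(v)$ and $q(v)=d_G(v)-g(v)$. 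The hypothesis $g(v)\le d_G(v)/2\le f(v)$ guarantees that on both sides $p(v)\le d_G(v)/2\le q(v)$, and $|f(v)-g(v)|\le k$ gives $|q(v)-p(v)|\le k$, so the numerical hypotheses of Theorem~\ref{thm:exact-p,q} are met.

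\noindent Next I would translate the counting condition. Theorem~\ref{thm:exact-p,q} requires an integer-valued function $t$ with $t(v)\in\{p(v),q(v)\}$ and $|E(G)|=\sum_{v\in V(G)}t(v)$. Given the function $h$ from the statement with $h(v)\in\{g(v),f(v)\}$ and $\sum_{v\in X}h(v)=\sum_{v\in Y}h(v)$, I would set $t(v)=h(v)$ for $v\in X$ and $t(v)=d_G(v)-h(v)$ for $v\in Y$. Then $t(v)\in\{p(v),q(v)\}$ on each side as required, and
$$\sum_{v\in V(G)}t(v)=\sum_{v\in X}h(v)+\sum_{v\in Y}\bigl(d_G(v)-h(v)\bigr)=\sum_{v\in Y}d_G(v)+\Bigl(\sum_{v\in X}h(v)-\sum_{v\in Y}h(v)\Bigr)=|E(G)|,$$
since in a bipartite graph $\sum_{v\in Y}d_G(v)=|E(G)|$ and the bracketed difference vanishes. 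Conversely, from any valid $t$ I would recover $h$ by reversing this substitution, which shows the counting condition on $t$ and the condition on $h$ are equivalent; this is what makes the statement a genuine equivalence rather than just a sufficient condition.

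\noindent Applying Theorem~\ref{thm:exact-p,q} then yields an orientation with $d^+_D(v)\in\{p(v),q(v)\}$ for all $v$, and pushing it back through Lemma~\ref{lem:factor-orientation} produces the desired factor $F$ with $d_F(v)\in\{g(v),f(v)\}$. For the ``furthermore'' clause, I would use the corresponding furthermore clause of Theorem~\ref{thm:exact-p,q}, which lets us prescribe $d^+_D(z)=t(z)$ for an arbitrary vertex $z$: if $z\in X$ this forces $d_F(z)=t(z)=h(z)$, and if $z\in Y$ it forces $d_F(z)=d_G(z)-t(z)=h(z)$, exactly as claimed.

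\noindent I expect the main obstacle to be purely bookkeeping rather than conceptual: one must be careful that the out-degree constraints land on the correct endpoints of the $\{p,q\}$ interval after the $Y$-side reflection $i\mapsto d_G(v)-i$, since this reflection reverses the roles of $g$ and $f$ and of $p$ and $q$. Verifying that $p\le d_G/2\le q$ and $|q-p|\le k$ survive this reflection on $Y$, and that the sum $\sum t(v)$ collapses to $|E(G)|$ precisely because the balance condition $\sum_{X}h=\sum_{Y}h$ cancels the cross term, is the one place where a sign or indexing slip would break the argument. Once that translation is pinned down, both directions and the furthermore clause follow immediately from the cited orientation theorem.
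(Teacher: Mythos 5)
Your proposal is correct and follows essentially the same route as the paper: define $p,q$ by reflecting $f,g$ on the $Y$-side, transfer $h$ to the function $t$ with $\sum t(v)=|E(G)|$, apply Theorem~\ref{thm:exact-p,q} to get the orientation, and pull it back through Lemma~\ref{lem:factor-orientation}. The only cosmetic difference is that the paper handles the furthermore clause by assuming $z\in X$ without loss of generality and proves the "only if" direction directly by setting $h(v)=d_F(v)$, whereas you treat $z\in Y$ explicitly and route the converse through the orientation correspondence; both are fine.
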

\begin{proof}
{If $G$ contains a $\{g,f\}$-factor $F$, then we can find the desired function $h$ by defining $h(v)=d_F(v)$ for each vertex $v$. 
Note that  $\sum_{v\in X}h(v)=\sum_{v\in X}d_F(v)=|E(F)|=\sum_{v\in Y}d_F(v)=\sum_{v\in Y}h(v)$.
Now, assume there is an integer-valued function $h$ on $V(G)$ in which 
$h(v)\in \{g(v),f(v)\}$ for each vertex $v$, and 
$\sum_{v\in X}h(v)=\sum_{v\in Y}h(v)$. 
We may assume that $z\in X$.
For each $v\in X$, define $\p(v)=g(v)$ and $\q(v)=f(v)$, 
and for each $v\in Y$, define $\p(v)=d_G(v)-f(v)$ and $\q(v)=d_G(v)-g(v)$.
If  we define $t(v)=h(v)$ for each $v\in X$, and define $t(v)=d_G(v)-h(v)$ for each $v\in Y$, then 
 $t(v)\in \{p(v), q(v)\}$ for each vertex $v$, and 
$\sum_{v\in V(G)}t(v)=\sum_{v\in X}h(v) +\sum_{v\in Y}(d_G(v)-h(v))= |E(G)|$. 
Thus by Theorem~\ref{thm:exact-p,q}, the graph $G$ has an orientation $D$ 
such that for each vertex $v$, $d^+_D(v)\in \{p(v), q(v)\}$, and $d^+_D(z)=t(z)$.
Consequently, by Lemma~\ref{lem:factor-orientation}, the graph $G$ must contain a $\{g,f\}$-factor $F$ with $d_F(z)=h(z)$.
Hence the proof is completed.
}\end{proof}
\subsection{Almost bipartite graphs}
In the following theorem, we generalize 
Theorem~\ref{thm:factor:necessary-sufficient:bipartite} to general graphs by imposing
more flexible conditions on the function $h$. 
More precisely, we will use an advantage of odd cycles and loops in our proof which allows us to modify the degree of some vertices a little.
\begin{thm}\label{thm:almost:bipartite}
{Let $G$ be a graph and let $g$ and $f$ be two integer-valued functions on $V(G)$ in which for each $v\in V(G)$, $g(v)\le d_G(v)/2\le f(v)$ and $|f(v)-g(v)|\le k$.
Assume that $G[X,Y]$ is $(4k^2+2k)$-tree-connected and $e_G(X)+e_G(Y)\le k-1$ for a bipartition $X,Y$ of $V(G)$.
Then $G$ has a factor $F$ such that for each vertex $v$,
$$d_F(v)\in \{g(v),f(v)\}.$$
if there is an integer-valued function $h$ on $V(G)$ in which 
for each $v\in V(G)$, $h(v)\in \{g(v),f(v)\}$, $\sum_{v\in V(G)}h(v)$ is even, and 
$2e_G(X)+1\ge \sum_{v\in X}h(v)-\sum_{v\in Y}h(v)\ge 0$.
}\end{thm}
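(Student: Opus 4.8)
The plan is to reduce the statement to the bipartite case of Theorem~\ref{thm:factor:necessary-sufficient:bipartite} by splitting $G$ into a spanning Eulerian subgraph $E$ that absorbs all of the internal edges (those counted by $e_G(X)$ and $e_G(Y)$) and a bipartite remainder $R=G\setminus E$ with bipartition $(X,Y)$. On $E$ I will produce a factor $F_E$ with $d_{F_E}(v)=d_E(v)/2+i(v)$ for a small integer correction $i$ using the Eulerian factor theorem (Theorem~\ref{thm:Eulerian}), on $R$ I will apply Theorem~\ref{thm:factor:necessary-sufficient:bipartite}, and then I will set $F=F_E\cup F_R$. For bookkeeping write $a_v=d_G(v)/2-g(v)\ge 0$ and $b_v=f(v)-d_G(v)/2\ge 0$, so that $a_v+b_v=f(v)-g(v)\le k$, and put $D=\sum_{v\in X}h(v)-\sum_{v\in Y}h(v)$ and $\tau=D-\big(e_G(X)-e_G(Y)\big)$.

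The governing identity is that for every factor $F$ one has $\sum_{v\in X}d_F(v)-\sum_{v\in Y}d_F(v)=2\alpha-2\gamma$, where $\alpha,\gamma$ count the internal $X$- and $Y$-edges of $F$; since all internal edges are placed in $E$, the bipartite remainder $R$ is balanced only once $E$ compensates for the imbalance. Because $\sum_v h(v)$ is even, $D$ is even, and the hypothesis $2e_G(X)+1\ge D\ge 0$ gives $0\le D\le 2e_G(X)$; a direct estimate then yields $|\tau|\le e_G(X)+e_G(Y)\le k-1$. Rather than correcting the whole imbalance at one vertex, I choose an integer function $i$ on $V(G)$ with $i(v)\in[-a_v,b_v]$ for every $v$ and $\sum_{v\in X}i(v)-\sum_{v\in Y}i(v)=\tau$, taking $i\ge 0$ on $X$ and $i\le 0$ on $Y$ when $\tau\ge 0$ (and symmetrically otherwise) so that $\sum_v|i(v)|=|\tau|$. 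Such an $i$ exists because $\tau=\sum_{v\in X}(h(v)-d_G(v)/2)+\sum_{v\in Y}(d_G(v)/2-h(v))$ is dominated term by term by $\sum_{v\in X}b_v+\sum_{v\in Y}a_v$. Set $t=\sum_v|i(v)|=|\tau|\le k-1$.

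For the decomposition I split the $(4k^2+2k)$ edge-disjoint spanning trees of $G[X,Y]$ into $4k^2$ of them, whose union is a bipartite $4k^2$-tree-connected graph $R_0$, and a remaining $2k$ of them, whose union is a bipartite graph $B_0$. I let $E$ consist of all internal edges together with enough spanning trees of $B_0$ to make $E$ connected and $(2t-1)$-edge-connected, plus a parity-correcting forest taken from one further spanning tree of $B_0$, chosen so that every $d_E(v)$ is even; this uses at most $2t\le 2k-2$ of the trees of $B_0$, so $R=G\setminus E\supseteq R_0$ is bipartite and $4k^2$-tree-connected. Since $E[X,Y]$ contains at least $t-1$ of these spanning trees and $E$ carries all $e_G(X)+e_G(Y)\ge t$ internal edges, Lemma~\ref{lem:bound:bipartite-index} gives $bi(E)\ge t-1$. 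I then apply Theorem~\ref{thm:factor:necessary-sufficient:bipartite} to $R$ with $g_R=g-d_E/2-i$, $f_R=f-d_E/2-i$ and certificate $h_R=h-d_E/2-i$: here $i(v)\in[-a_v,b_v]$ is precisely the requirement $g_R(v)\le d_R(v)/2\le f_R(v)$, the gap $|f_R-g_R|=|f-g|\le k$ is preserved, and the choice $\sum_{v\in X}i(v)-\sum_{v\in Y}i(v)=\tau$ makes $\sum_{v\in X}h_R(v)=\sum_{v\in Y}h_R(v)$. This balance makes $\sum_v h_R(v)$ even, and combined with $\sum_v h(v)$ even and the identity $\sum_v h_R(v)=\sum_v h(v)-|E(E)|-\sum_v i(v)$ it forces $|E(E)|\equiv t\pmod{2}$, so Theorem~\ref{thm:Eulerian} applies to $E$ and returns $F_E$ with $d_{F_E}(v)=d_E(v)/2+i(v)$. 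As $F_R$ satisfies $d_{F_R}(v)\in\{g_R(v),f_R(v)\}$, the union $F=F_E\cup F_R$ obeys $d_F(v)=d_E(v)/2+i(v)+d_{F_R}(v)\in\{g(v),f(v)\}$, as required.

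The main obstacle is exactly the reason the single-vertex Corollary~\ref{cor:Eulerian} does not suffice: the imbalance $\tau$ may have size up to $k-1$, yet an individual vertex tolerates only a shift $i(v)\in[-a_v,b_v]$ whose width $f(v)-g(v)$ can be far smaller than $k-1$, so the correction must be spread over many vertices and fed into the multi-vertex Eulerian theorem (Theorem~\ref{thm:Eulerian}). The delicate point is to arrange simultaneously that $\sum_v|i(v)|=|\tau|$ stays at most $e_G(X)+e_G(Y)\le k-1$ --- which is where the hypothesis $e_G(X)+e_G(Y)\le k-1$ is used, and where the odd cycles formed by the internal edges supply the bipartite index $bi(E)\ge t-1$ demanded by Theorem~\ref{thm:Eulerian} --- while respecting every per-vertex bound $i(v)\in[-a_v,b_v]$ so that the residual bipartite degree targets remain feasible.
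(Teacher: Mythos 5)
Your overall strategy is the paper's: decompose $G$ into an Eulerian factor $E$ carrying all the internal edges and a $4k^2$-tree-connected bipartite remainder, realize a correction of total size $|\tau|\le e_G(X)+e_G(Y)\le k-1$ on the Eulerian part, and finish with Theorem~\ref{thm:factor:necessary-sufficient:bipartite} on the remainder; your computation of $\tau$, the parity check $|E(E)|\stackrel{2}{\equiv}t$, and the bound $bi(E)\ge t-1$ via Lemma~\ref{lem:bound:bipartite-index} all agree with the paper. Where you genuinely diverge is in how the imbalance is absorbed: the paper concentrates all of $t$ at one vertex $z$ (shifting the bipartite target at $z$ by $-t$ and invoking the single-vertex Corollary~\ref{cor:Eulerian}), whereas you distribute it over many vertices via a correction function $i$ and the multi-vertex Theorem~\ref{thm:Eulerian}. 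Your stated motivation --- that a single vertex need not have slack $f(z)-g(z)$ large enough to absorb a shift of size up to $k-1$ --- is a legitimate concern about the concentrated approach.

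The problem is that your distributed version has a gap at exactly the step it was designed to repair: the existence of the integer-valued function $i$. You justify it by term-by-term domination of $\tau=\sum_{v\in X}(h(v)-d_G(v)/2)+\sum_{v\in Y}(d_G(v)/2-h(v))$ by $\sum_{v\in X}b_v+\sum_{v\in Y}a_v$, but $a_v$ and $b_v$ are half-integers whenever $d_G(v)$ is odd, while $i(v)$ must be an integer; the admissible range is really $[-\lfloor a_v\rfloor,\lfloor b_v\rfloor]$, so the achievable values of $\sum_{v\in X}i(v)-\sum_{v\in Y}i(v)$ are capped by $\sum_{v\in X}\lfloor b_v\rfloor+\sum_{v\in Y}\lfloor a_v\rfloor$, which can be strictly smaller than $\tau$. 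Concretely, if every vertex has odd degree and $f(v)-g(v)\le 1$, then $\lfloor a_v\rfloor=\lfloor b_v\rfloor=0$ for all $v$ and the only admissible choice is $i\equiv 0$; yet $\tau\stackrel{2}{\equiv}e_G(X)+e_G(Y)$, so taking $e_G(X)+e_G(Y)$ odd forces $\tau\ne 0$. Such configurations (e.g.\ a slightly modified $K_{n,n}$ with one internal edge, $k=2$, $g=(d_G-1)/2$, $f=(d_G+1)/2$) satisfy every hypothesis of the theorem, so your construction of $i$ fails there and the reduction to Theorem~\ref{thm:factor:necessary-sufficient:bipartite} does not go through. (Minor additional point: for $t=0$ your stated budget of $2t$ trees of $B_0$ is zero, yet you still need one or two trees to make $E$ connected and Eulerian; this is harmless since $2k\ge 2$.)
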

\begin{proof}
{By Lemma~\ref{lem:decomposition:Eulerian}, the graph $G$ can be decomposed into two factors $G_1$ and $G_2$ such that $G_1$ is a $4k^2$-tree-connected bipartite graph with bipartition $(X,Y)$ and 
$G_2[X,Y]$ is a $2k$-tree-connected Eulerian graph so that $e_{G_2}(X)+e_{G_2}(Y)= e_{G}(X)+e_{G}(Y)\le k-1$.
Pick $z\in X$ and let $t=\sum_{v\in X}h(v)-\sum_{v\in Y}h(v)-e_G(X)+e_G(Y)$.
Since $\sum_{v\in X}h(v)-\sum_{v\in Y}h(v)$ is even, we must have
$2e_G(X)\ge \sum_{v\in X}h(v)-\sum_{v\in Y}h(v)$ and 
$2e_G(Y)\ge 0\ge \sum_{v\in Y}h(v)-\sum_{v\in X}h(v)$ which imply that $|t| \le e_{G}(X)+e_{G}(Y)\le k-1$.
Define $h'(z)=h(z)-d_{G_2}(z)/2-t$, and define $h'(v)=h(v)-d_{G_2}(v)/2$ for each $v\in V(G)\setminus \{z\}$.
It is easy to check that
$\sum_{v\in X}d_{G_2}(v)/2+e_{G_2}(X)=\sum_{v\in Y}d_{G_2}(v)/2+e_{G_2}(Y)$,
which implies that
$$\sum_{v\in X}h'(v)=\sum_{v\in X}(h(v)-d_{G_2}(v)/2)-t=
\sum_{v\in Y}(h(v)-d_{G_2}(v)/2)=\sum_{v\in Y}h'(v).$$
Thus by Theorem~\ref{thm:factor:necessary-sufficient:bipartite},
the graph $G_1$ has a factor $F_1$
such that 
$d_{F_1}(z)\in \{g(z)-d_{G_2}(z) /2-t,f(z)-d_{G_2}(z) /2-t\}\supseteq\{h'(z)\} $ and 
$d_{F_1}(v)\in \{g(v)-d_{G_2}(v) /2,f(v)-d_{G_2}(v) /2\}\supseteq\{h'(v)\} $ for each $v\in V(G)\setminus \{z\}$.
Since $t$ and $e_{G}(X)+e_{G}(Y)$ have the same parity, we must have 
$t\stackrel{2}{\equiv} e_{G_2}(X)+e_{G_2}(Y) \stackrel{2}{\equiv}\sum_{v\in V(G)}d_{G_2}(v)/2$.
Thus by applying Corollary~\ref{cor:Eulerian}, the graph $G_2$ has a factor $F_2$ such that 
$d_{F_2}(z)=d_{G_2}(z)/2+t$ and 
$d_{F_2}(v)=d_{G_2}(v)/2$ for each $v\in V(G)\setminus \{z\}$.
Note that according to Lemma~\ref{lem:bound:bipartite-index}, $|t|\le bi(G_2)\le k-1$.
It is easy to check that $F_1\cup F_2$ is the desired factor we are looking for.
}\end{proof}
\subsection{Graphs with bipartite index at least $k-1$}
The following theorem provides a supplement for Theorem~\ref{thm:almost:bipartite}.
This result replaces a simpler condition for graphs with higher bipartite index.
\begin{thm}\label{thm:bi-index:large}
{Let $G$ be a graph, let $k$ be a positive integer, and let $g$ and $f$ be two integer-valued functions on $V(G)$ in which for each $v\in V(G)$, $g(v)\le d_G(v)/2\le f(v)$ and $|f(v)-g(v)|\le k$.
Assume that $G[X,Y]$ is $3k^2$-tree-connected and $e_G(X)+e_G(Y)\ge k-1$ for a bipartition $X,Y$ of $V(G)$.
Then $G$ has a factor $F$ such that for each vertex~$v$,
$$d_F(v)\in \{g(v),f(v)\},$$
if and only if either there is a vertex $u$ with $f(u)-g(u)$ odd or $f(v)-g(v)$ is even for all vertices $v$ and $\sum_{v\in V(G)}f(v)$ is even.
}\end{thm}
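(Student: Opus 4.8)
The plan is to prove necessity by a one-line parity count and sufficiency by marrying the $z$-defective orientation theorem (Theorem~\ref{thm:z-defective-p,q}) to the single-vertex Eulerian correction of Corollary~\ref{cor:Eulerian}, using the $\ge k-1$ within-part edges simultaneously as the bipartite-index reservoir of the Eulerian part and as the budget that repairs the one defective vertex.

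For necessity, suppose $G$ has a $\{g,f\}$-factor $F$ and that $f(v)-g(v)$ is even for every $v$. Then $d_F(v)\in\{g(v),f(v)\}$ forces $d_F(v)\equiv f(v)\pmod 2$ at each vertex, so $\sum_v f(v)\equiv\sum_v d_F(v)=2|E(F)|\equiv 0\pmod 2$; hence $\sum_v f(v)$ is even. The other alternative, that some $u$ has $f(u)-g(u)$ odd, imposes no constraint, so the stated condition is necessary.

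For sufficiency, I would first decompose the core. Since $G[X,Y]$ is $3k^2$-tree-connected and $3k^2\ge\bigl(\tfrac32 k+1\bigr)(k-1)+2k+1$ for $k\ge 1$, Lemma~\ref{lem:decomposition:Eulerian} splits $G$ into a bipartite factor $G_1$ with bipartition $(X,Y)$ that is $\bigl(\tfrac32 k+1\bigr)(k-1)$-tree-connected and an Eulerian factor $G_2$ that is $2k$-tree-connected and carries every within-part edge, so $e_{G_2}(X)+e_{G_2}(Y)=e_G(X)+e_G(Y)\ge k-1$ and hence $bi(G_2)\ge k-1$ by Lemma~\ref{lem:bound:bipartite-index}. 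On $G_1$ I set $p(v)=g(v)-d_{G_2}(v)/2$, $q(v)=f(v)-d_{G_2}(v)/2$ for $v\in X$ and the complementary bounds $p(v)=d_{G_1}(v)-f(v)+d_{G_2}(v)/2$, $q(v)=d_{G_1}(v)-g(v)+d_{G_2}(v)/2$ for $v\in Y$; these bracket $d_{G_1}(v)/2$. Theorem~\ref{thm:z-defective-p,q} then yields an orientation of $G_1$ with $d^+(v)\in\{p(v),q(v)\}$ for every $v$ except one prescribed vertex $z$, whose out-degree may be placed in any half-open window of length $k$ about $d_{G_1}(z)/2$ via the free parameter $x$. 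Converting through Lemma~\ref{lem:factor-orientation} produces a factor $F_1$ of $G_1$ with $d_{F_1}(v)+d_{G_2}(v)/2\in\{g(v),f(v)\}$ for all $v\ne z$. Corollary~\ref{cor:Eulerian} then supplies a factor $F_2$ of $G_2$ with $d_{F_2}(z)=d_{G_2}(z)/2+t$ and $d_{F_2}(v)=d_{G_2}(v)/2$ otherwise, as long as $|t|\le k-1\le bi(G_2)$ and $t\equiv|E(G_2)|\pmod 2$. Using $d_F(v)=d_{F_1}(v)+d_{F_2}(v)$, the factor $F=F_1\cup F_2$ already meets every degree requirement away from $z$, and the single remaining task is to choose $t$ so that $d_{F_1}(z)+d_{G_2}(z)/2+t\in\{g(z),f(z)\}$.

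The hard part is this final choice of $t$, which is exactly where the dichotomy hypothesis is consumed. Since $d_{F_1}(z)$ only lies in a slideable length-$k$ window, I can always reach one of the two targets by a shift of size at most $k-1$; the genuine obstruction is the parity demand $t\equiv|E(G_2)|\pmod 2$ from Corollary~\ref{cor:Eulerian}. I would take the defective vertex to be $z=u$ whenever some $u$ has $f(u)-g(u)$ odd: then $g(u)$ and $f(u)$ have opposite parities, a correcting $t$ of \emph{either} parity lands in $\{g(u),f(u)\}$, and the parity equation is vacuous. When instead $f(v)-g(v)$ is even for all $v$, the assumption that $\sum_v f(v)$ is even pins down the parity of $|E(G_2)|=\sum_v d_{G_2}(v)/2$ so that an admissible $t$ of the required parity exists. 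Checking that these two parity bookkeeping identities align, and that the window for $z$ can in fact be slid to within distance $k-1$ of a target of the correct parity, is the only delicate point; the rest is assembling $F_1\cup F_2$ and verifying the elementary degree identities.
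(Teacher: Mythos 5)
Your proposal follows essentially the same route as the paper's proof: the same decomposition via Lemma~\ref{lem:decomposition:Eulerian} into a $(\frac{3}{2}k+1)(k-1)$-tree-connected bipartite part and a $2k$-tree-connected Eulerian part carrying the within-part edges, the same combination of Theorem~\ref{thm:z-defective-p,q} with Lemma~\ref{lem:factor-orientation} on $G_1$, the same single-vertex correction via Corollary~\ref{cor:Eulerian} on $G_2$, and the same choice of the defective vertex $z$ to resolve the parity of $t$. The only small slip is your bound $|t|\le k-1$: the argument actually yields $|t|\le |f(z)-g(z)|/2+k/2\le k$, which still suffices because Corollary~\ref{cor:Eulerian} only requires $bi(G_2)\ge |t|-1$ and $2|t|$-tree-connectivity.
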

\begin{proof}
{By Lemma~\ref{lem:decomposition:Eulerian}, the graph $G$ can be decomposed into two factors $G_1$ and $G_2$ such that $G_1$ is a $(\frac{3}{2}k+1)(k-1)$-tree-connected bipartite graph with bipartition $(X,Y)$ and 
$G_2[X,Y]$ is a $2k$-tree-connected Eulerian graph and $e_{G_2}(X)+e_{G_2}(Y)=e_{G}(X)+e_{G}(Y)\ge k-1$.
Note that $(\frac{3}{2}k+1)(k-1)+2k+1\le 3k^2$.
If for all vertices $v$, $f(v)-g(v)$ is even, 
we set $z$ to be an arbitrary vertex; 
otherwise, we set $z$ to be a vertex with $f(z)-g(z)$ odd.
By applying a combination of Lemma~\ref{lem:factor-orientation} and Theorem~\ref{thm:z-defective-p,q}, similarly to the proof of Theorem~\ref{thm:factor:necessary-sufficient:bipartite}, one can conclude that the graph $G_1$ has a factor $F_1$
such that for each $v\in V(G)\setminus \{z\}$, $d_{F_1}(v)\in \{g(v)-d_{G_2}(v) /2,f(v)-d_{G_2}(v) /2\}$, and 
$$d_G(v)/2-k\le (f(z)+g(z))/2-d_{G_2}(z)/2-k/2 \le \, d_{F_1}(z)\,\le (f(z)+g(z))/2-d_{G_2}(z)/2+k/2\le d_G(v)/2+k.$$
Let $t\in \{g(z)-d_{F_1}(z)-d_{G_2}(z)/2, f(z)-d_{F_1}(z)-d_{G_2}(z)/2\}$.
Note that $|t|\le |f(z)-g(z)|/2+k/2\le k$.
If for all vertices $v$, $f(v)-g(v)$ is even, then 
$$ d_{F_1}(z) +\sum_{v\in V(G)\setminus \{z\}}d_{G_2}(v)/2
\stackrel{2}{\equiv}\sum_{v\in V(G)\setminus \{z\}}(d_{F_1}(v)+d_{G_2}(v)/2)
 \stackrel{2}{\equiv}\sum_{v\in V(G)\setminus \{z\}}f(v)\stackrel{2}{\equiv} f(z), $$
which implies that
$t \stackrel{2}{\equiv} f(z)-d_{F_1}(z)-d_{G_2}(z)/2\stackrel{2}{\equiv}\sum_{v\in V(G)}d_{G_2}(v)/2$.
Therefore, we can select $t$ such that $t\stackrel{2}{\equiv}\sum_{v\in V(G)}d_{G_2}(v)/2$, 
regardless of $f(z)-g(z)$ is odd or not.
Thus by applying Corollary~\ref{cor:Eulerian}, the graph $G_2$ has a factor $F_2$ such that 
$d_{F_2}(z)=d_{G_2}(z)/2+t$ and 
$d_{F_2}(v)=d_{G_2}(v)/2$ for each $v\in V(G)\setminus \{z\}$.
Note that according to Lemma~\ref{lem:bound:bipartite-index}, $bi(G_2)\ge k-1 \ge |t|-1$.
It is easy to check that $F_1\cup F_2$ is the desired factor we are looking for.
}\end{proof}
\section{The existence of tree-connected $\{g,f\}$-factors}
\label{sec:tree-connected-factors}
In this section, we are going to develop
Theorems~\ref{thm:factor:necessary-sufficient:bipartite} and~\ref{thm:bi-index:large} to tree-connected versions. For this purpose, we need to apply the following lemma in our proofs.
\begin{lem}{\rm (\cite{complementary})}\label{lem:m-m0}
{Every $(2m+2m_0)$-edge-connected graph $G$ with $m+m_0>0$ 
has an $m$-tree-connected $H$ such that its complement is $m_0$-tree-connected and for each vertex~$v$,
$$\lfloor \frac{d_G(v)}{2}\rfloor -m_0\le d_H(v) \le \lceil \frac{d_G(v)}{2}\rceil+m.$$
}\end{lem}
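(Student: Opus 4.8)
The plan is to pass from this undirected decomposition problem to a directed one via a balanced, highly arc-connected orientation, to read off the two tree-connected parts from arc-disjoint arborescences, and then to absorb the remaining edges so as to balance the degrees. First I would fix a well-balanced orientation $D$ of $G$ in the sense of Nash--Williams: one for which $\lambda_D(u,v)\ge\lfloor\lambda_G(u,v)/2\rfloor$ for every ordered pair $u,v$ and, simultaneously, $d^+_D(v)\in\{\lfloor d_G(v)/2\rfloor,\lceil d_G(v)/2\rceil\}$ for every vertex $v$. Since $G$ is $(2m+2m_0)$-edge-connected this gives $\lambda_D(u,v)\ge m+m_0$, so $D$ is $(m+m_0)$-arc-connected, while the degree balance records that $d^+_D(v)\le\lceil d_G(v)/2\rceil$ and $d^-_D(v)\ge m+m_0$.

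Next, fix any root $r$. Because $D$ is $(m+m_0)$-arc-connected, Edmonds' branching theorem yields $m+m_0$ pairwise arc-disjoint spanning out-arborescences rooted at $r$. I would assign $m$ of them to $H$ and the remaining $m_0$ to the complement $\bar H:=G\setminus E(H)$; read as undirected graphs these are $m$ (respectively $m_0$) edge-disjoint spanning trees, so, regardless of how the remaining edges are distributed, $H$ is $m$-tree-connected and $\bar H$ is $m_0$-tree-connected by the theorem of Nash--Williams and Tutte \cite{Nash-Williams-1961,Tutte-1961}. Let $a_v$ (respectively $b_v$) be the number of edges incident with $v$ used by the arborescences assigned to $H$ (respectively $\bar H$). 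At a non-root $v$ each arborescence contributes exactly one in-arc, and the out-arcs consumed at $v$ by all $m+m_0$ arborescences are distinct among the $d^+_D(v)$ out-arcs; hence $a_v\le m+d^+_D(v)\le\lceil d_G(v)/2\rceil+m$ and symmetrically $b_v\le\lceil d_G(v)/2\rceil+m_0$.

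The third step is where the slack $-m_0$ and $+m$ in the statement is spent. I would distribute the leftover edges, forming a spanning subgraph $G'$ of $G$, between $H$ and $\bar H$ to pull every degree into the prescribed window. Writing $d_H(v)=a_v+x_v$, where $x_v$ is the number of leftover edges at $v$ placed in $H$, the target $\lfloor d_G(v)/2\rfloor-m_0\le d_H(v)\le\lceil d_G(v)/2\rceil+m$ becomes an interval constraint $x_v\in[g(v),f(v)]$; the two one-sided bounds $a_v\le\lceil d_G(v)/2\rceil+m$ and $b_v\le\lceil d_G(v)/2\rceil+m_0$ established above are exactly what forces each interval to be nonempty, while $m+m_0>0$ forces each interval to have length at least $1$. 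Such a degree-constrained spanning subgraph of $G'$ is precisely a $(g,f)$-factor, whose existence is governed by Lov\'asz's criterion (Theorem~\ref{lem:Lovasz}); since $g(v)<f(v)$ at every vertex, no component can contribute to $\omega_{g,f}(G',A,B)$, which vanishes identically, and the criterion collapses to a single inequality $\sum_{v\in A}f(v)+\sum_{v\in B}(d_{G'\setminus A}(v)-g(v))\ge 0$.

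I expect the main obstacle to be exactly this inequality. The difficulty is that $g(v)$ may exceed $d_{G'\setminus A}(v)$ once neighbours of $v$ are placed in $A$, so the estimate is not automatic from nonemptiness of the intervals; the arborescences have also consumed part of the connectivity of $G$, so $G'$ need not be highly connected. I would control it by exploiting the balance of $D$ to keep $a_v$ and $b_v$ close to $d_G(v)/2$, which keeps the lower bounds $g(v)$ small, and then either verify Lov\'asz's condition by a deficiency/flow computation on $G'$ or argue directly by augmenting paths. A secondary point requiring care is the orientation step itself: one must invoke the refinement of Nash--Williams' orientation theorem that delivers arc-connectivity and best-balanced out-degrees simultaneously, rather than either property in isolation.
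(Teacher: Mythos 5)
The paper itself does not prove this lemma (it is imported from \cite{complementary}), so your proposal has to stand on its own, and it does not: the last step is a genuine gap, and it is exactly where the entire difficulty of the statement lives. Your first two steps are sound --- Nash--Williams' strong orientation theorem does give an orientation that is simultaneously $(m+m_0)$-arc-connected and best-balanced, Edmonds' branching theorem gives the $m+m_0$ arc-disjoint out-arborescences, and the bounds $a_v\le \lceil d_G(v)/2\rceil+m$ and $b_v\le \lceil d_G(v)/2\rceil+m_0$ are correct and do make every interval $[g(v),f(v)]$ nonempty. But pointwise nonemptiness of the intervals is far from sufficient for a $(g,f)$-factor of $G'$ to exist: already for a path $u$--$v$--$w$ with $f(u)=f(w)=0$, $g(v)=2$, $f(v)=3$, every interval is nonempty, yet Lov\'asz's condition (Theorem~\ref{lem:Lovasz}) fails for $A=\{u,w\}$, $B=\{v\}$, and indeed no such subgraph exists. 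Your reduction can land in precisely this situation: Edmonds' theorem gives no control over how the arborescences distribute their out-arcs, so a vertex $u$ may be saturated ($a_u=\lceil d_G(u)/2\rceil+m$, forcing $f(u)=0$) while an adjacent vertex $v$ is starved ($a_v=m$, forcing $g(v)=\lfloor d_G(v)/2\rfloor-m_0-m>0$); if all leftover neighbours of $v$ are saturated, the required $(g,f)$-factor of $G'$ does not exist. You flag this yourself (``I expect the main obstacle to be exactly this inequality''), but an intention to ``verify Lov\'asz's condition by a deficiency/flow computation'' is not an argument.

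The deeper issue is that the missing step is not a verification but the actual content of the lemma: one must control the vertex degrees of the tree-connected parts themselves, not merely of the leftover graph. Even the natural repair --- splitting $G'$ evenly at every vertex via an Eulerian-type orientation --- gives $d_H(v)\approx d_G(v)/2+(a_v-b_v)/2$, and $a_v-b_v$ can be as large as roughly $d^+_D(v)+m-m_0$, far outside the window of width $2m+2m_0$ that would be needed. So the approach would require spanning arborescences with degree bounds at every vertex, which is essentially the statement being proved; it cannot be recovered from Edmonds' theorem plus Lov\'asz's criterion in the way you outline, and this is presumably why the source \cite{complementary} uses different, more delicate tools.
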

\subsection{Bipartite graphs}
A tree-connected version of Theorem~\ref{thm:factor:necessary-sufficient:bipartite} is given in the following theorem.
\begin{thm}
{Let $G$ be a $(2m+2m_0+4k^2)$-tree-connected bipartite graph with bipartition $(X,Y)$ and
let $g$ and $f$ be two integer-valued functions on $V(G)$ in which for each $v\in V(G)$, 
$g(v)+m_0\le \frac{1}{2}d_G(v)\le f(v)-m$ and $|f(v)-g(v)|\le k$.
Then 
 $G$ has an $m$-tree-connected factor $H$ such that its complement is $m_0$-tree-connected and for each vertex~$v$, $$d_H(v)\in \{g(v),f(v)\},$$
if and only if there is an integer-valued function $h$ on $V(G)$ with $\sum_{v\in X}h(v)=\sum_{v\in Y}h(v)$
 in which for each vertex $v$, $h(v)\in \{g(v),f(v)\}$. Furthermore, for an arbitrary given vertex $z$, we can have $d_H(z)=h(z)$.
}\end{thm}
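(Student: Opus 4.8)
The plan is to route the tree-connectivity of both $H$ and its complement through a single Eulerian ``skeleton'' and then to fix the exact degrees on a disjoint bipartite remainder via Theorem~\ref{thm:factor:necessary-sufficient:bipartite}. Necessity is immediate: if such an $H$ exists, put $h(v)=d_H(v)\in\{g(v),f(v)\}$; since $H$ is a factor of the bipartite graph $G$ with bipartition $(X,Y)$, every edge of $H$ joins $X$ to $Y$, so $\sum_{v\in X}h(v)=|E(H)|=\sum_{v\in Y}h(v)$, and trivially $d_H(z)=h(z)$. So the content is the sufficiency direction, where I am given $h$ and must build $H$.

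For sufficiency I would first dispose of the degenerate case $m+m_0=0$, in which the claim is exactly Theorem~\ref{thm:factor:necessary-sufficient:bipartite}, and then assume $m+m_0>0$. Since $G=G[X,Y]$ is $(4k^2+2m+2m_0)$-tree-connected, I would apply Lemma~\ref{lem:decomposition:Eulerian} with $m_1=4k^2$ and $m_2=2m+2m_0-1$ (so that $m_1+m_2+1=4k^2+2m+2m_0$) to decompose $G$ into a $4k^2$-tree-connected bipartite factor $G_1$ with bipartition $(X,Y)$ and an Eulerian $(2m+2m_0-1)$-tree-connected factor $G_2$. The insistence on an \emph{Eulerian} $G_2$ serves two purposes. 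First, $G_2$ is $(2m+2m_0-1)$-edge-connected, and since every edge cut of an Eulerian graph is even, it is in fact $(2m+2m_0)$-edge-connected; this recovers the single unit of connectivity spent on the ``$+1$'' in Lemma~\ref{lem:decomposition:Eulerian} and lets me invoke Lemma~\ref{lem:m-m0} on $G_2$, producing an $m$-tree-connected factor $H_0$ whose complement in $G_2$ is $m_0$-tree-connected and which satisfies $d_{G_2}(v)/2-m_0\le d_{H_0}(v)\le d_{G_2}(v)/2+m$. Here the second benefit appears: because $d_{G_2}(v)$ is even, the floor and ceiling in Lemma~\ref{lem:m-m0} both collapse to $d_{G_2}(v)/2$, giving clean two-sided control.

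Next I would transfer the problem to $G_1$ by setting $g_1(v)=g(v)-d_{H_0}(v)$, $f_1(v)=f(v)-d_{H_0}(v)$, and $h_1(v)=h(v)-d_{H_0}(v)$, and apply Theorem~\ref{thm:factor:necessary-sufficient:bipartite} to the $4k^2$-tree-connected bipartite graph $G_1$. Using $d_{G_1}(v)=d_G(v)-d_{G_2}(v)$ together with the two-sided bound on $d_{H_0}(v)$, the degree hypothesis $g_1(v)\le d_{G_1}(v)/2\le f_1(v)$ is implied by the standing hypothesis $g(v)+m_0\le\tfrac12 d_G(v)\le f(v)-m$; the spread $|f_1(v)-g_1(v)|=|f(v)-g(v)|\le k$ is unchanged; and since $H_0$ is a bipartite factor, $\sum_{v\in X}d_{H_0}(v)=\sum_{v\in Y}d_{H_0}(v)$, so $\sum_{v\in X}h_1(v)=\sum_{v\in Y}h_1(v)$ follows from the given identity for $h$. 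Theorem~\ref{thm:factor:necessary-sufficient:bipartite} then yields a factor $F_1$ of $G_1$ with $d_{F_1}(v)\in\{g_1(v),f_1(v)\}$ and $d_{F_1}(z)=h_1(z)$. Setting $H=H_0\cup F_1$ finishes the construction: $d_H(v)=d_{H_0}(v)+d_{F_1}(v)\in\{g(v),f(v)\}$ with $d_H(z)=h(z)$, the containment $H_0\subseteq H$ makes $H$ $m$-tree-connected, and $G_2\setminus H_0\subseteq G\setminus H$ makes the complement $m_0$-tree-connected.

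I expect the genuine obstacle to be exactly the degree bookkeeping where the rounding in Lemma~\ref{lem:m-m0} meets the tight inequalities $g(v)+m_0\le\tfrac12 d_G(v)\le f(v)-m$. A naive split of $G$ into an \emph{arbitrary} $(2m+2m_0)$-tree-connected part and a $4k^2$-tree-connected part fails by a half-integer whenever the skeleton has odd degree at a vertex with $d_G(v)$ even, because then the ceiling (for the upper bound) or the floor (for the lower bound) overshoots the available slack. Forcing the skeleton $G_2$ to be Eulerian is precisely what removes this half-integer gap, while the even-cut argument is what keeps the total tree-connectivity requirement at $4k^2+2m+2m_0$ instead of $4k^2+2m+2m_0+1$.
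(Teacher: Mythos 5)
Your proof is correct and follows essentially the same route as the paper: split $G$ into an Eulerian spanning subgraph carrying the tree-connectivity of $H$ and its complement (handled by Lemma~\ref{lem:m-m0}) and a $4k^2$-tree-connected bipartite remainder on which the degrees are fixed by Theorem~\ref{thm:factor:necessary-sufficient:bipartite}, with identical degree bookkeeping. The only difference is cosmetic: you obtain the Eulerian skeleton via Lemma~\ref{lem:decomposition:Eulerian} plus the even-cut parity upgrade, whereas the paper extracts a $(2m+2m_0)$-edge-connected Eulerian factor directly using Jaeger's spanning-Eulerian-subgraph result.
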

\begin{proof}
{If $m=m_0=0$, then the assertion follows from Theorem~\ref{thm:factor:necessary-sufficient:bipartite}.
So, suppose $m+m_0>0$. 
Since every $2$-tree-connected graph has a spanning Eulerian subgraph~\cite{Jaeger-1979}, one can decompose 
 $G$ into two factors $G_1$ and $G_2$ such that $G_1$ is a $(2m+2m_0)$-edge-connected Eulerian graph and 
$G_2$ is $4k^2$-tree-connected. 
By Lemma~\ref{lem:m-m0}, 
the graph $G_1$ has an $m$-tree-connected factor $H'$ such that its complement is $m_0$-tree-connected and for each vertex $v$, 
$d_{G_1}(v)/2-m_0 \le d_{H'}(v)\le d_{G_1}(v)/2+m$.
For each vertex $v$, define $g'(v)=g(v)-d_{H'}(v)$ and $f'(v)=f(v)-d_{H'}(v)$ so that $|f'(v)-g'(v)|\le k$.
By the assumption, 
$$g'(v)=g(v)-d_{H'}(v)\le d_{G}(v)/2-m_0-(d_{G_1}(v)/2-m_0)= d_{G_2}(v)/2.$$
Similarly, $d_{G_2}(v)/2\le f'(v)$.
If for each vertex $v$, we define $h'(v)=h(v)-d_{H'}(v)$, then 
$h(v)\in \{g'(v),f'(v)\}$, and also
$\sum_{v\in X}h'(v)=\sum_{v\in Y}h'(v)$.
Thus by Theorem~\ref{thm:factor:necessary-sufficient:bipartite}, the graph $G_2$ has a factor $F$ such that for each vertex $v$,
$d_{F(v)}\in \{g'(v),f'(v)\}$.
It is easy to check that $ H'\cup F$ is the desired factor we are looking for.
}\end{proof}
\subsection{Graphs with bipartite index at least $k-1$}
A tree-connected version of Theorem~\ref{thm:bi-index:large} is given in the following theorem.
\begin{thm}\label{thm:main:g-f}
{Let $G$ be a $(2m+2m_0+6k^2)$-tree-connected graph with $bi(G)\ge k-1$ and
let $g$ and $f$ be two integer-valued functions on $V(G)$ in which for each $v\in V(G)$, 
$g(v)+m_0\le \frac{1}{2}d_G(v) \le f(v)-m$ and $|f(v)-g(v)|\le k$.
Then $G$ has an $m$-tree-connected factor $H$ such that its complement is $m_0$-tree-connected and for each vertex~$v$,
 $$d_H(v)\in \{g(v),f(v)\}.$$
if and only if either there is a vertex $u$ with $f(u)-g(u)$ odd or $f(v)-g(v)$ is even for all vertices $v$ and $\sum_{v\in V(G)}f(v)$ is even. 
}\end{thm}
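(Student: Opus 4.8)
The plan is to follow the template of the bipartite tree-connected theorem proved just above, replacing the coarse Eulerian splitting by the sharper decomposition of Lemma~\ref{lem:decomposition:bipartite-index}, which is exactly what preserves the bipartite-index hypothesis on the leftover factor. I begin with necessity, which is a pure parity count independent of tree-connectivity: if $H$ is a factor with $d_H(v)\in\{g(v),f(v)\}$ for every $v$ and no vertex $u$ satisfies $f(u)-g(u)$ odd, then $g(v)\stackrel{2}{\equiv}f(v)$ for all $v$, whence $d_H(v)\stackrel{2}{\equiv}f(v)$; summing over $V(G)$ gives $\sum_{v\in V(G)}f(v)\stackrel{2}{\equiv}\sum_{v\in V(G)}d_H(v)=2|E(H)|\stackrel{2}{\equiv}0$, so $\sum_{v\in V(G)}f(v)$ is even and the stated condition holds.

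For sufficiency I split on $m+m_0$. When $m=m_0=0$ I reduce directly to Theorem~\ref{thm:bi-index:large}; the only missing ingredient is a bipartition meeting its hypotheses. Applying Lemma~\ref{lem:decomposition:bipartite-index} with $m_1=0$, $m_2=3k^2$, and $k_0=k-1$ (valid because $G$ is $6k^2$-tree-connected and $3k^2\ge k-1$) yields a bipartition $X,Y$ for which $G_2[X,Y]$ is $3k^2$-tree-connected; since $G[X,Y]$ contains $G_2[X,Y]$ on the same vertex set, $G[X,Y]$ is $3k^2$-tree-connected as well, while $e_G(X)+e_G(Y)\ge bi(G)\ge k-1$ holds for every bipartition by the definition of $bi(G)$. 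Thus Theorem~\ref{thm:bi-index:large} applies to $G$ and delivers the factor.

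When $m+m_0>0$ I apply Lemma~\ref{lem:decomposition:bipartite-index} with $m_1=m+m_0$, $m_2=3k^2$, and $k_0=k-1$, so that the tree-connectivity budget $2m_1+2m_2$ equals exactly the hypothesized $2m+2m_0+6k^2$. This decomposes $G$ into a $(2m+2m_0)$-edge-connected Eulerian factor $G_1$ and a factor $G_2$ with $G_2[X,Y]$ being $3k^2$-tree-connected and $e_{G_2}(X)+e_{G_2}(Y)\ge k-1$. Feeding $G_1$ to Lemma~\ref{lem:m-m0} produces an $m$-tree-connected $H'$ with $m_0$-tree-connected complement satisfying $d_{G_1}(v)/2-m_0\le d_{H'}(v)\le d_{G_1}(v)/2+m$ (the floor and ceiling being exact since $G_1$ is Eulerian). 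Putting $g'(v)=g(v)-d_{H'}(v)$ and $f'(v)=f(v)-d_{H'}(v)$ keeps $|f'-g'|\le k$, and the degree hypotheses give $g'(v)=g(v)-d_{H'}(v)\le(d_G(v)/2-m_0)-(d_{G_1}(v)/2-m_0)=d_{G_2}(v)/2$ and, symmetrically, $d_{G_2}(v)/2\le f'(v)$. Because $f'-g'=f-g$ and $\sum_{v}f'(v)$ differs from $\sum_{v}f(v)$ by $2|E(H')|$, the odd/even dichotomy passes verbatim to $(g',f')$, so Theorem~\ref{thm:bi-index:large} gives a factor $F$ of $G_2$ with $d_F(v)\in\{g'(v),f'(v)\}$. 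Then $H:=H'\cup F$ works: $d_H(v)=d_{H'}(v)+d_F(v)\in\{g(v),f(v)\}$, the containment $H\supseteq H'$ makes $H$ being $m$-tree-connected, and the complement of $H$ contains the $m_0$-tree-connected complement of $H'$ in $G_1$, hence is itself $m_0$-tree-connected.

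The crux is choosing the right decomposition rather than any routine estimate: one must use Lemma~\ref{lem:decomposition:bipartite-index} in place of the simpler Eulerian decomposition of Lemma~\ref{lem:decomposition:Eulerian}, since only it certifies that the residual factor $G_2$ retains $e_{G_2}(X)+e_{G_2}(Y)\ge k-1$, the very condition Theorem~\ref{thm:bi-index:large} needs on $G_2$. Balancing the tree-connectivity so that $m_1=m+m_0$ and $m_2=3k^2$ consume precisely the budget $2m+2m_0+6k^2$ is the single bookkeeping point that must be gotten right.
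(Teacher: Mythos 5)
Your proof is correct and follows essentially the same route as the paper: decompose $G$ via Lemma~\ref{lem:decomposition:bipartite-index} into a $(2m+2m_0)$-edge-connected Eulerian factor $G_1$ and a factor $G_2$ retaining the bipartite-index condition, apply Lemma~\ref{lem:m-m0} to $G_1$, shift $g,f$ by $d_{H'}$, and finish with Theorem~\ref{thm:bi-index:large} on $G_2$. You additionally spell out the necessity direction and use the correct budget $m_2=3k^2$ (the paper's ``$2k^2$'' appears to be a typo), but the argument is the same.
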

\begin{proof}
{By Lemma~\ref{lem:decomposition:bipartite-index}, the graph $G$
 can be decomposed into two factors $G_1$ and $G_2$ such that 
$G_1$ is a $(2m+2m_0)$-edge-connected Eulerian graph so that 
$G_2[X,Y]$ is $2k^2$-tree-connected and $e_{G_2}(X)+e_{G_2}(Y)\ge k-1$ for a bipartition $X,Y$ of $V(G)$. 
If $m=m_0=0$, then the assertion follows from Theorem~\ref{thm:bi-index:large}.
So, suppose $m+m_0>0$. 
By Lemma~\ref{lem:m-m0}, 
the graph $G_1$ has an $m$-tree-connected factor $H'$ such that its complement is $m_0$-tree-connected and for each vertex $v$, 
$d_{G_1}(v)/2-m_0 \le d_{H'}(v)\le d_{G_1}(v)/2+m$.
For each vertex $v$, define $g'(v)=g(v)-d_{H'}(v)$ and $f'(v)=f(v)-d_{H'}(v)$ so that $|f'(v)-g'(v)|\le k$. 
By the assumption, 
$$g'(v)=g(v)-d_{H'}(v)\le d_{G}(v)/2-m_0-(d_{G_1}(v)/2-m_0)= d_{G_2}(v)/2.$$
Similarly, $d_{G_2}(v)/2\le f'(v)$.
In addition, either there is a vertex $u$ with $f'(u)-g'(u)$ odd or $f'(v)-g'(v)$ is even for all vertices $v$ and $\sum_{v\in V(G)}f'(v)$ is even.
Thus by Theorem~\ref{thm:bi-index:large}, the graph $G_2$ has a factor $F$ such that for each vertex $v$,
$d_{F(v)}\in \{g'(v),f'(v)\}$.
It is easy to check that $ H'\cup F$ is the desired factor we are looking for.
}\end{proof}
\begin{remark}
{Note that the results of this section can be developed to version for investigating tree-connected factors with given sparse lists on degrees; similar to a result in~\cite{p-q} for investigating orientations with given sparse lists on out-degrees. 
Also, Theorem~\ref{thm:main:g-f} can be stated for graphs with $bi(G)\le k-1$, but we need to insert a stronger condition on $h$ using the same proof.
}\end{remark}
\section{An application to tough enough graphs}
\label{sec:tough-graphs}
The following lemma played a simple but important role in the proof of Theorem~\ref{intro:thm:tree-connected}. 
In order to make a new generalization for it, we again need to apply this lemma in our proof.
\begin{lem}{\rm (\cite{f-f+k})}\label{lem:factor-bipartite-index}
{Let $G$ be a graph with a $(2k-2)$-tree-connected factor $F$. 
If $G$ is $4k$-tough and $|V(G)| \ge 4k$, then it has a matching $M$ of size $k-1$ such that $bi(F\cup M)\ge k-1$.
}\end{lem}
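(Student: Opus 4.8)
The plan is to certify $bi(F\cup M)\ge k-1$ by exhibiting a bipartition $(X,Y)$ of $V(G)$ for which $(F\cup M)[X,Y]$ is $(k-1)$-tree-connected while at least $k-1$ edges of $F\cup M$ lie inside the classes; Lemma~\ref{lem:bound:bipartite-index}, applied with parameter $k-1$, then yields the conclusion immediately. The case $k=1$ is vacuous, since the empty matching already gives $bi(F)\ge 0$, so I would assume $k\ge 2$.

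First I would fix the bipartition. Viewing $F$ as a $(2\cdot 0+2(k-1))$-tree-connected graph, I apply Lemma~\ref{lem:decomposition:bipartite-index} to $F$ with $m_1=0$, $m_2=k-1$, and $k_0=0$ to produce a bipartition $(X,Y)$ of $V(G)$ and a factor $G_2\subseteq F$ such that $G_2[X,Y]$ is $(k-1)$-tree-connected. Since $F[X,Y]\supseteq G_2[X,Y]$, the crossing subgraph $(F\cup M)[X,Y]=F[X,Y]$ stays $(k-1)$-tree-connected for every matching $M$ whose edges lie inside $X$ or inside $Y$ (such edges do not cross the cut, so they leave the crossing subgraph untouched). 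Hence it remains only to find a matching $M$ of size $k-1$ whose edges are \emph{monochromatic} with respect to $(X,Y)$: these $k-1$ distinct inside edges yield $e_{F\cup M}(X)+e_{F\cup M}(Y)\ge k-1$, and Lemma~\ref{lem:bound:bipartite-index} finishes the argument. Note that the $k-1$ matching edges are automatically edge-disjoint from the spanning trees used inside that lemma, because those trees live in the crossing subgraph whereas the matching edges lie inside the classes.

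The heart of the proof, and the step I expect to be the main obstacle, is showing that the graph $G[X]\cup G[Y]$ of monochromatic edges admits a matching of size $k-1$; this is exactly where $4k$-toughness and $|V(G)|\ge 4k$ are used. Suppose not, so a maximum monochromatic matching has size at most $k-2$. Its at most $2(k-2)$ endpoints form a vertex cover $S$ of all monochromatic edges, so $G-S$ is bipartite with classes $X\setminus S$ and $Y\setminus S$; consequently $|X\setminus S|+|Y\setminus S|\ge |V(G)|-|S|\ge 4k-(2k-4)=2k+4$, and I may assume $|X\setminus S|\ge |Y\setminus S|$, whence $|X\setminus S|\ge k+2\ge 2$. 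Deleting $S\cup(Y\setminus S)$ leaves the independent set $X\setminus S$, which therefore separates $G$ into $|X\setminus S|\ge 2$ singleton components, so toughness forces $|S|+|Y\setminus S|\ge 4k\,|X\setminus S|$. On the other hand $|Y\setminus S|\le |X\setminus S|$ and $|S|\le 2k-4$ give $|S|+|Y\setminus S|\le |X\setminus S|+2k-4$. Combining the two estimates yields $(4k-1)\,|X\setminus S|\le 2k-4$, which is impossible since $|X\setminus S|\ge k+2$. This contradiction produces the required monochromatic matching of size at least $k-1$; trimming it to size exactly $k-1$ gives $M$, and the proof is complete. The only quantitative slack that must be watched is that $F$ is $(2k-2)$-tree-connected, so that the crossing subgraph can be taken $(k-1)$-tree-connected, and that the factor $4k$ comfortably dominates the $2k-4$ arising from the vertex cover.
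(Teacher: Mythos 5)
This lemma is imported into the paper from the reference \cite{f-f+k} and no proof of it appears in the present source, so there is nothing here to compare your argument against line by line; judged on its own, your proof is complete and correct. Your route is the natural one and almost certainly close in spirit to the original: Lemma~\ref{lem:decomposition:bipartite-index} applied to $F$ with $m_1=0$, $m_2=k-1$, $k_0=0$ legitimately produces a bipartition $(X,Y)$ with $F[X,Y]$ containing a $(k-1)$-tree-connected spanning bipartite subgraph, so by Lemma~\ref{lem:bound:bipartite-index} it suffices to find $k-1$ independent edges of $G$ inside the classes, and since such edges do not enter the crossing subgraph the tree-connectivity certificate is undisturbed. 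Your toughness step is also sound: if the maximum monochromatic matching had size at most $k-2$, its endpoint set $S$ (of size at most $2k-4$) covers all monochromatic edges, the larger class $X\setminus S$ is an independent set of size at least $k+2\ge 2$ by $|V(G)|\ge 4k$, and deleting $S\cup(Y\setminus S)$ isolates its $|X\setminus S|$ vertices, so $4k\,|X\setminus S|\le |S|+|Y\setminus S|\le |X\setminus S|+2k-4$, which is absurd. The $k=1$ case is indeed vacuous. The only cosmetic caveat is that the paper allows loops and multiple edges, so one should say explicitly that loops inside $X\setminus S$ (which a matching cannot cover) do not affect the component count $\omega=|X\setminus S|$ on which the toughness inequality rests; this does not change the argument.
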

A generalization of Theorem~\ref{intro:thm:tree-connected} is given in the following theorem.
\begin{thm}
{Let $G$ be a graph and 
let $g$ and $f$ be two integer-valued functions on $V(G)$ satisfying 
$3m+2m_0+6k^2<f\le b$ and $m+m_0<f-g\le k$, 
where $k$, $b$, $m$, and $m_0$ are four nonnegative integers.
If $G$ is $4b^2$-tough and $|V(G)|\ge 4b^2$, 
then $G$ has an $m$-tree-connected factor $H$ such that its complement is $m_0$-tree-connected and for each vertex~$v$,
 $$d_H(v)\in \{g(v),f(v)\},$$
provided that either there is a vertex $u$ with $f(u)-g(u)$ odd or $f(v)-g(v)$ is even for all vertices $v$ and $\sum_{v\in V(G)}f(v)$ is even. 
}\end{thm}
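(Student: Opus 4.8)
The plan is to reduce the whole statement to a single application of Theorem~\ref{thm:main:g-f}. That theorem already produces the desired $m$-tree-connected factor with $m_0$-tree-connected complement and degrees in $\{g(v),f(v)\}$, once it is fed a host factor $G_0'$ of $G$ that (i) is $(2m+2m_0+6k^2)$-tree-connected, (ii) satisfies $bi(G_0')\ge k-1$, and (iii) obeys $g(v)+m_0\le \frac{1}{2}d_{G_0'}(v)\le f(v)-m$ for every $v$. The role of toughness is precisely to manufacture such a factor $G_0'$ whose degrees are pushed down from the (possibly huge) degrees of $G$ into the narrow admissible window, while retaining enough tree-connectivity. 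Since $H\subseteq G_0'\subseteq G$, the $m$-tree-connectedness of $H$ transfers to $G$ verbatim, and the complement of $H$ in $G$ contains the complement taken inside $G_0'$, which is $m_0$-tree-connected; hence a factor for $G_0'$ is automatically a factor for $G$.

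First I would build a bounded-degree tree-connected factor $G_0$ by invoking the toughness result for $\{f,f+1\}$-factors, Theorem~\ref{intro:thm:lem:f-f+1}, with the parameters $b':=2b$, $m':=2m+2m_0+6k^2$, and $\phi(v):=2(f(v)-m)-2$. The hypotheses line up exactly: $G$ is $4b^2=(b')^2$-tough with $|V(G)|\ge 4b^2=(b')^2$; the upper bound $\phi(v)\le 2f(v)-2\le 2b-2< b'$ is immediate; and the lower bound $\phi(v)\ge 2m'$ is forced by $f>3m+2m_0+6k^2$, since then $f(v)-m\ge 2m+2m_0+6k^2+1$ gives $\phi(v)=2(f(v)-m)-2\ge 4m+4m_0+12k^2=2m'$. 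Theorem~\ref{intro:thm:lem:f-f+1} then yields an $m'$-tree-connected (hence $(2m+2m_0+6k^2)$-tree-connected) factor $G_0$ with $d_{G_0}(v)\in\{2(f(v)-m)-2,\,2(f(v)-m)-1\}$.

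It remains to secure the bipartite-index condition. As $G_0$ is $(2m+2m_0+6k^2)$-tree-connected and $6k^2\ge 2k-2$, it contains a $(2k-2)$-tree-connected factor $F$ (a union of $2k-2$ of its edge-disjoint spanning trees), which is also a factor of $G$; applying Lemma~\ref{lem:factor-bipartite-index} to $G$ (which is $4k$-tough with $|V(G)|\ge 4k$, since $4b^2\ge 4k$) produces a matching $M$ of $G$ with $|M|=k-1$ and $bi(F\cup M)\ge k-1$. Setting $G_0':=G_0\cup M$, monotonicity of the bipartite index under subgraphs gives $bi(G_0')\ge bi(F\cup M)\ge k-1$, and $G_0'$ remains $(2m+2m_0+6k^2)$-tree-connected. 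Because $M$ is a matching it raises each degree by at most one, so $d_{G_0'}(v)\le 2(f(v)-m)$, giving $\frac{1}{2}d_{G_0'}(v)\le f(v)-m$; and $\frac{1}{2}d_{G_0'}(v)\ge \frac{1}{2}d_{G_0}(v)\ge f(v)-m-1\ge g(v)+m_0$, where the last step is exactly the hypothesis $m+m_0<f(v)-g(v)$. With $|f-g|\le k$ and the parity hypothesis inherited unchanged, Theorem~\ref{thm:main:g-f} applies to $G_0'$ and finishes the argument.

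The main obstacle is the simultaneous bookkeeping of the constants across these steps. The lower bound on $f$ only guarantees that $f$ is large, so $\phi$ must be chosen at the \emph{top} of its admissible window to meet the tree-connectivity demand $\phi\ge 2m'$ of Theorem~\ref{intro:thm:lem:f-f+1}, yet it must sit exactly two below $2(f-m)$ so that the unavoidable $+1$ per vertex contributed by the bipartite-index matching $M$ does not violate $\frac{1}{2}d\le f-m$, while the companion inequality $\frac{1}{2}d\ge g+m_0$ survives on precisely the slack furnished by $m+m_0<f-g$. Checking that the toughness $4b^2$ and the order bound $4b^2$ translate, under $b'=2b$, into both the $\{f,f+1\}$-factor hypotheses and the $4k$-toughness required by Lemma~\ref{lem:factor-bipartite-index} is the crux; the remaining verifications are formal.
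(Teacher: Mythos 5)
Your proposal is correct and follows essentially the same route as the paper: apply Theorem~\ref{intro:thm:lem:f-f+1} with the target function $2(f-m-1)$ (the paper's $2h$) to get a highly tree-connected bounded-degree factor, add a matching via Lemma~\ref{lem:factor-bipartite-index} to force $bi\ge k-1$, and finish with Theorem~\ref{thm:main:g-f}. The only differences are cosmetic (you take $b'=2b$ rather than $2b-2m-2$, and feed a $(2k-2)$-tree-connected subfactor into Lemma~\ref{lem:factor-bipartite-index} plus monotonicity of $bi$ instead of the whole factor), and your constant-checking is sound.
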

\begin{proof}
{For each vertex $v$, define $h(v)=f(v)-m-1$.
Since $3m+2m_0+6k^2<f(v)\le b$, we must have $2m' \le 2h(v) \le b' $, 
where $m'=2m+2m_0+6k^2$ and $b'=2b-2m-2$.
By the assumption, $|V(G)|\ge 4b^2\ge (b')^2$.
Thus by Theorem~\ref{intro:thm:lem:f-f+1}, the graph $G$ has an $m'$-tree-connected $\{2h,2h+1\}$-factor $G'$. 
Since $|V(G)| \ge 4b^2 \ge 4k$ and $m'\ge 2k-2$, 
by Lemma~\ref{lem:factor-bipartite-index}, 
there is a matching $M$ of size $k-1$ such that $bi(G'\cup M)\ge k-1$. Let $G_0=G'\cup M$.
Note that for each vertex $v$, $2h(v)\le d_{G_0}(v) \le 2h(v)+2$. 
Since $m+m_0<f(v)-g(v)$, we must have 
$g(v)+m_0\le h(v)\le \frac{1}{2}d_{G_0}(v)\le h(v) +1= f(v)-m$.
Therefore, by Theorem~\ref{thm:main:g-f}, the graph $G_0$ has an $m$-tree-connected $\{g,f\}$-factor $H$ such that its complement is $m_0$-tree-connected and so does $G$.
}\end{proof}
%
%
%
%
%
%
%
%
%
%
%
%
%
%
%
%
%
%
%

\end{document}